\newcommand{\RP}{\mathbb{R}\mathrm{P}}
\newcommand{\CP}{\mathbb{C}\mathrm{P}}
\renewcommand{\tilde}{\widetilde}
\newcommand{\im}{\hbox{im}}
\newcommand{\R}{\mathbb{R}}
\renewcommand{\C}{\mathbb{C}}
\newcommand{\N}{\mathbb{N}}
\newcommand{\Z}{\mathbb{Z}}
\newcommand{\Q}{\mathbb{Q}}
\renewcommand{\H}{\mathbb{H}}
\newcommand{\K}{\mathbb{K}}
\newcommand{\T}{\mathbb{T}}
\newcommand{\F}{\mathbb{F}}
\newcommand{\Hil}{\mathcal{H}}
\newcommand{\Kil}{\mathcal{K}}
\newcommand{\B}{\mathcal{B}}
\newcommand{\1}{\mathbbm{1}}
\newcommand{\eps}{\varepsilon}
\newcommand{\la}{\lambda}
\newcommand{\ve}{\varepsilon}
\newcommand{\Contradiction}{\Rightarrow\Leftarrow}
\DeclareMathOperator{\lspan}{span}
\DeclareMathOperator{\tr}{tr}
\DeclareMathOperator{\ran}{ran}
\DeclareMathOperator{\supp}{supp}
\DeclareMathOperator{\Supp}{Supp}
\DeclareMathOperator{\diag}{diag}
\providecommand{\abs}[1]{\lvert#1\rvert}
\providecommand{\norm}[1]{\lVert#1\rVert}
\renewcommand{\theenumi}{\roman{enumi}}
\renewcommand{\labelenumi}{(\theenumi)}
\newcounter{Theorem}
\numberwithin{equation}{section}
\numberwithin{Theorem}{section}
\theoremstyle{plain} 
\newtheorem{thm}[Theorem]{Theorem}
\newtheorem{claim}[Theorem]{Claim}
\theoremstyle{definition}
\newtheorem{defn}[Theorem]{Definition}
\theoremstyle{remark}
\newtheorem{ex}[Theorem]{Example}
\newcommand{\mref}[1]{%
\href{http://www.ams.org/mathscinet-getitem?mr=#1}{#1}}
\newcommand{\arxiv}[1]{%
\href{http://front.math.ucdavis.edu/#1}{ArXiv:#1}}
\newcommand{\zbl}[1]{%
\href{http://zbmath.org/?q=an:#1}{Zbl #1}}
\newcommand{\doi}[1]{%
\href{http://dx.doi.org/#1}{doi:#1}}
\begin{document}

\title{A pathological construction for real functions with large collections of level sets}

\author{Gavin Armstrong}
\address{Department of Mathematics, University of Oregon, Eugene, OR 97403--1222, USA}
\email{gka@uoregon.edu}

\keywords{Collections of level sets, Hausdorff dimension, Real functions}

\subjclass[2000]{Primary: 26A06, 26A18, 28A78, Secondary: 37E05, 28A80} 
\date{\today}

\begin{abstract}

Consider all the level sets of a real function. We can group these level sets according to their Hausdorff dimensions. We show that the Hausdorff dimension of the collection of all level sets of a given Hausdorff dimension can be arbitrarily close to 1, even if the function is differentiable to some level. By definition of Hausdorff dimension it is clear, for any real function $f(x)$ and any $\alpha \in [0,1]$, that $\dim_{H} \left\{ \hspace{0.03in} y \ : \ \dim_{H} (f^{-1}(y)) \geq \alpha \hspace{0.03in} \right\} \leq 1$. What is surprising, and what we show, is that this is actually a sharp bound. That is, $$\sup \left\{ \hspace{0.03in} \dim_{H} \left\{ \hspace{0.03in} y \ : \ \dim_{H} (f^{-1}(y)) = 1 \hspace{0.03in} \right\} \ : \ f \in C^{k} \hspace{0.03in} \right\} = 1,$$
for any $k \in \mathbb{Z}_{\geq 0}$.
\end{abstract}

\maketitle

\vspace{-0.2in}

\section{Preliminaries}\label{S1}

For the purposes of this paper it will be sufficient to consider functions of the form $$f : [0,1] \rightarrow [0,1].$$

\*

Let $y \in [0,1]$ and consider the level set $f^{-1}(y) \subseteq [0,1]$.

\*

For any $d \in [0,\infty)$, this level set has a $d$-dimensional {\bf Hausdorff content} given by
$$C^{d}_{H} \left( f^{-1}(y) \right) = \inf \left\{ \hspace{0.03in} \sum_i r_{i}^{d} \ : \ \text{ there is a cover of } f^{-1}(y) \text{ by balls of radii } r_i > 0 \hspace{0.03in} \right\}.$$

Further, $f^{-1}(y)$ has a {\bf Hausdorff dimension} given by
$$\dim_{H} \left( f^{-1}(y) \right) = \inf \left\{ \hspace{0.03in} d \geq 0 \ : \ C_{H}^{d} \left( f^{-1} (y) \right) = 0 \hspace{0.03in} \right\}.$$

\*

We are interested in all those $y$ whose pre-images have positive Hausdorff dimension:
\begin{eqnarray}
\left\{ \hspace{0.03in} y \in [0,1] \ : \ \dim_{H} \left( f^{-1}(y) \right) > 0 \hspace{0.03in} \right\}. \nonumber
\end{eqnarray}

\noindent More specifically though we are interested in the sets $$\left\{ \hspace{0.03in} y \in [0,1] \ : \ \dim_{H} \left( f^{-1}(y) \right) \geq \alpha \hspace{0.03in} \right\},$$
where $0 \leq \alpha \leq 1$. 

We wish to find functions, $f(x)$, that maximize the Hausdorff dimension of this set.

\vfill

\pagebreak

\begin{defn}\label{d1}
Let $0 \leq \alpha \leq 1$. Define
\[
I_{\alpha}(f) = \left\{ \hspace{0.03in} y \in [0,1] \ : \ \dim_{H} \left( f^{-1}(y) \right) \geq \alpha \hspace{0.03in} \right\}.
\]
\end{defn}

\*

\noindent Note: Trivially, for any function $f(x)$, we have $I_{0} (f) = Range (f)$.
\vspace{0.1in}

\vspace{0.5in}

\section{Examples}

\begin{ex}[\textbf{Trivial Example}]

\*

Consider the graph of the function
\begin{alignat*}{4}
f \ :\ [0&,1] &&\quad \longrightarrow &\quad [0&,1] \\
&x &&\quad \longmapsto &\quad -x(&x-1).
\end{alignat*}

\*

\begin{center}
\begin{tikzpicture}[scale=1.2]
      \draw[->] (0,0) -- (6,0)  node[right]{$x$};
      \draw[->] (0.5,-0.5) -- (0.5,5.5) node[above]{$y$};
      
      \foreach \x [evaluate=\x as \xval using 5*\x] in {0.2, 0.4, 0.6, 0.8, 1}{
	  	\draw (\xval + 0.5, -0.1) -- node[below,yshift=-0.02in]{$\x$} (\xval + 0.5, 0.1);
      }
      
      \draw[domain=0.5:5.5,smooth,variable=\x,blue] plot ({\x},{-5*(\x-0.5)*(\x - 5.5)/6.25});

      \draw (4,4.5) node[right,blue]{$f(x) = -x(x-1)$};
      
      \draw[decoration={brace},decorate,thick]  (-1.5,0) -- node[left=6pt] {$I_0 (f)$} (-1.5,5);

      \draw[dashed] (-1.5,0) -- (5.5,0);
      
      \draw[dashed] (-1.5,5) -- (3,5);
      
      \draw[thick] (-0.75,0) -- (-0.75,5);
      
      \draw[thick] (-0.65,0) -- (-0.85,0);
      \draw[thick] (-0.65,5) -- (-0.85,5);
      
      \foreach \y [evaluate=\y as \yval using 20*\y] in {0.05, 0.1, 0.15, 0.2, 0.25}{
      	\draw[rounded corners, white, fill=white] (-0.25, \yval - 0.25) rectangle (0.42, \yval + 0.25);
      	\draw (0.5 - 0.1,\yval) -- node[left]{$\y$} (0.5 + 0.1,\yval);
      }

\end{tikzpicture}
\end{center}

\*

As expected, in this case $I_{0}(f) = [0,0.25]$. 

\*

Note that the pre-image of each point in the range of $f(x)$ is at most finite. Thus the pre-image of each point has trivial Hausdorff dimension. Hence $$I_{\alpha}(f) =\emptyset \hspace{0.5in} \text{ and } \hspace{0.5in} \dim_{H} I_{\alpha} (f) = 0,$$ for all $\alpha > 0$.

\end{ex}

\vfill

\pagebreak

\begin{ex}[\textbf{Another Trivial Example}]

\*

Consider any constant function. For example:
\[
f \ : \ [0,1] \ \longrightarrow \ [0,1], \quad\quad x \ \longmapsto 0.5.
\]

\begin{center}
\begin{tikzpicture}[scale=1.2]
      \draw[->] (0,0) -- (6,0)  node[right]{$x$};
      \draw[->] (0.5,-0.5) -- (0.5,5.5) node[above]{$y$};
      
      \foreach \x [evaluate=\x as \xval using 5*\x] in {0.2, 0.4, 0.6, 0.8, 1}{
	  	\draw (\xval + 0.5, -0.1) -- node[below,yshift=-0.02in]{$\x$} (\xval + 0.5, 0.1);
      }
      
      \foreach \y [evaluate=\y as \yval using 5*\y] in {0.2, 0.4, 0.6, 0.8, 1}{
	  	\draw (0.5 - 0.1,\yval) -- node[left]{$\y$} (0.5 + 0.1,\yval);
      }
      
      \draw[domain=0.5:5.5,smooth,variable=\x,blue] plot ({\x},{2.5});

      \draw (5.5,2.5) node[right,blue]{$f(x) = 0.5$};

      \draw[dashed] (-1,2.5) -- (0.5,2.5);

      \draw (0.5,2.5) node[circle,fill, inner sep=0.01pt] {.};
      \draw (-1,2.5) node[left] {$I_0 (f)$};

\end{tikzpicture}
\end{center}

\*

In this case the only non-trivial pre-image is $f^{-1}\left( 0.5 \right) = [0,1]$.

\*

The unit interval has Hausdorff dimension $1$, and so $$I_{\alpha}(f) = \left\{ 0.5 \right\} \hspace{0.5in} \text{ and } \hspace{0.5in} \dim_{H}  I_{\alpha} (f) = 0,$$ for all $0 \leq \alpha \leq 1$.

\end{ex}

\vfill

\noindent The next question is: How large can we make $I_{\alpha} (f)$, for $\alpha > 0$, while preserving continuity or even differentiability? 

\*

The next example shows that we can construct a continuous function $f(x)$ such that $I_{1}(f)$ is infinite.

\vfill

\pagebreak

\begin{ex}(\textbf{Non-Trivial $I_{\alpha}(f)$})

Consider the function
\[
f_1 \ : \ \left[ 0, 0.5 \right] \ \longrightarrow \ [0,1], \quad \quad x \ \longmapsto \begin{cases} \frac{3}{2}x & \text{ if } x \in \left[0, \frac{1}{6} \right] \\
\frac{1}{4} & \text{ if } x \in \left[ \frac{1}{6}, \frac{1}{3} \right] \\
\frac{3}{2}x - \frac{1}{4} & \text{ if } x \in \left[ \frac{1}{3}, \frac{1}{2} \right] \end{cases}
\]

\begin{center}
\begin{tikzpicture}[scale=0.85]
      \draw[->] (0,0) -- (6,0)  node[right]{$x$};
      \draw[->] (0.5,-0.5) -- (0.5,5.5) node[above]{$y$};
      
      \foreach \x [evaluate=\x as \xval using 10*\x] in {0.1, 0.2, 0.3, 0.4, 0.5}{
	  	\draw (\xval + 0.5, -0.1) -- node[below,yshift=-0.02in]{$\x$} (\xval + 0.5, 0.1);
      }
      
      \foreach \y [evaluate=\y as \yval using 10*\y] in {0.1, 0.2, 0.3, 0.4, 0.5}{
	  	\draw (0.5 - 0.1,\yval) -- node[left]{$\y$} (0.5 + 0.1,\yval);
      }
      
      \draw[domain=0.5:2.16667,smooth,variable=\x,blue] plot ({\x},{1.5*(\x - 0.5)});

      \draw[domain=2.16667:3.8333,smooth,variable=\x,blue] plot ({\x},{2.5});
      
      \draw[domain=3.8333:5.5,smooth,variable=\x,blue] plot ({\x},{1.5*(\x - 0.5) - 2.5});
      
      \draw (0.5,5) -- (5.5,5) -- (5.5,0) -- (0.5,0) -- (0.5,5) -- cycle;
      
      \draw (5.5,2.5) node[right,blue]{$f_1 (x)$};

\end{tikzpicture}
\end{center}

\*

Take this function and make scaled copies of it with dimensions $\frac{1}{2^k} \times \frac{1}{2^k}$. Then graph these scaled functions end-to-end so that the bottom left coordinate of the $k$-th graph coincides with the point $\left( 1 - \frac{1}{2^{k-1}}, 1 - \frac{1}{2^{k-1}} \right)$.

\*

\begin{center}
\begin{tikzpicture}[scale=1]
      \draw[->] (0,0) -- (6,0)  node[right]{$x$};
      \draw[->] (0.5,-0.5) -- (0.5,5.5) node[above]{$y$};
      
      \foreach \x [evaluate=\x as \xval using 5*\x] in {0.2, 0.4, 0.6, 0.8, 1}{
	  	\draw (\xval + 0.5, -0.1) -- node[below,yshift=-0.02in]{$\x$} (\xval + 0.5, 0.1);
      }
      
      \foreach \t [evaluate=\t as \txval using 0.5*2^(\t) + 5*(1 - 1/2^(\t - 1))*2^(\t), evaluate=\t as \tyval using 5*(1 - 1/2^(\t - 1))*2^(\t)] in {1, 2, 3, 4, 5, 6}{
	      \begin{scope}[shift={(\txval, \tyval)}, transform canvas={scale=1/2^(\t)}]
      		  \draw[domain=0:1.66667,smooth,variable=\x,blue] plot ({\x},{1.5*(\x)});

	     	  \draw[domain=1.66667:3.3333,smooth,variable=\x,blue] plot ({\x},{2.5});
      
	      	  \draw[domain=3.3333:5,smooth,variable=\x,blue] plot ({\x},{1.5*(\x) - 2.5});
      
	      	  \draw (0,5) -- (5,5) -- (5,0) -- (0,0) -- (0,5) -- cycle;
	      \end{scope};
	}

      \draw (5.5,2.5) node[right,blue]{$f (x)$};
      
      \draw (5.5,5) node {.};
      
      \foreach \s [evaluate=\s as \syval using 5/2^(\s + 1) + 5*(1 - 1/2^(\s - 1)), evaluate=\s as \sxval using 0.5 + 5*(1 - 1/2^(\s- 1)) + 5*0.333/2^(\s)] in {1, ..., 6}{
	      \draw (0.5, \syval) node[circle, fill, inner sep=0.01pt] {.};
	      \draw[dashed] (-1.5, \syval) -- (0.5, \syval);
	      \draw[dashed] (0.5, \syval) -- (\sxval,\syval);
      }

      \draw[decoration={brace},decorate,thick]  (-1.5,5/4) -- node[left=6pt] {$I_1 (f)$} (-1.5,5);
      
      \foreach \y [evaluate=\y as \yval using 5*\y] in {0.2, 0.4, 0.6, 0.8, 1}{
      		\draw[rounded corners, white, fill=white] (-0.25, \yval - 0.25) rectangle (0.42, \yval + 0.25);
	  	\draw (0.5 - 0.1,\yval) -- node[left]{$\y$} (0.5 + 0.1,\yval);
      }

\end{tikzpicture}
\end{center}

\*

This gives us a continuous (although not differentiable) function $f : [0,1] \longrightarrow [0,1]$ such that 
$$I_{\alpha}(f) = \left\{ \frac{1}{4}, \frac{5}{8}, \hdots, \frac{2^{i} - \frac{3}{2}}{2^{i}}, \hdots \right\} \hspace{0.4in} \text{ and } \hspace{0.4in} \dim_{H}  I_{\alpha} (f) = 0,$$ 
for all $0 < \alpha \leq 1$.
\end{ex}

\vfill

\pagebreak

\section{Main Theorem}

\noindent In this paper we show the following very counterintuitive result: 

We can make $\dim_H I_{\alpha}(f) \leq 1$ arbitrarily close to $1$, for all $0 \leq \alpha \leq 1$, while still maintaining the continuity and even differentiability of $f(x)$.

\begin{thm}\label{t1}
For any $k \in \mathbb{Z}_{\geq 0}$ and any $0 \leq \alpha \leq 1$ we have
\[
\sup \left\{ \hspace{0.03in} \dim_{H} (I_{\alpha} (f) ) \ : \ f \in C^{k} \hspace{0.03in} \right\} = 1. \nonumber
\]
\end{thm}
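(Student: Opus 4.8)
# Proof Proposal

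\textbf{Overall strategy.} The plan is to construct, for each $\varepsilon > 0$ and each $k$, a function $f \in C^k([0,1],[0,1])$ with $\dim_H I_1(f) \geq 1 - \varepsilon$; since $I_1(f) \subseteq I_\alpha(f)$ for all $\alpha \leq 1$, and since $\dim_H I_\alpha(f) \leq 1$ always, this proves the theorem for every $\alpha$ simultaneously. The idea is to build a ``bump library'': a single core gadget that is flat on a Cantor-type set of positive Hausdorff dimension, then to paste scaled copies of this gadget along the diagonal so that the library of flat levels itself traces out a set of large Hausdorff dimension in the $y$-axis. This is the natural continuation of the third example in the excerpt, where the flat pieces sat at isolated heights $\tfrac{2^i - 3/2}{2^i}$; we now want those heights to form a fat Cantor-like set rather than a sequence, and we want each flat piece to be a Cantor set of dimension close to $1$ rather than a single interval.

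\textbf{Step 1: a $C^k$ gadget flat on a fat Cantor set.} First I would fix $\beta < 1$ close to $1$ and build a $C^k$ function $g : [0,1] \to [0,1]$, monotone, with $g(0)=0$, $g(1)=1$, such that $g$ is constant on each of the (countably many) complementary intervals of a Cantor set $K_\beta \subseteq [0,1]$ with $\dim_H K_\beta = \beta$, and such that $g$ maps $K_\beta$ onto a set of dimension $\beta$ as well. Concretely one removes middle-portion intervals with rapidly shrinking ratios (so the dimension stays above $\beta$), and on each removed interval one interpolates a plateau using a fixed $C^k$ ``smoothed step'' profile, rescaled; one checks that the derivatives up to order $k$ can be bounded uniformly across all scales (this is where the ``differentiable to some level'' in the abstract, rather than $C^\infty$, matters — the $k$-th derivative bound blows up if one insists on smoothness but is controllable for fixed finite $k$). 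The upshot: $g^{-1}(y)$ contains a nondegenerate interval for each $y$ in a dimension-$\beta$ set of heights, but we want \emph{more} than an interval in the fibre, so really Step 1 should produce a gadget whose level sets over a fat set of heights are themselves Cantor sets of dimension $\ge 1-\varepsilon$; this is done by composing/iterating the plateau construction, replacing each plateau by a scaled rotated copy of a similar gadget, which leads to:

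\textbf{Step 2: self-similar pasting.} Using the gadget of Step 1 as a building block, I would iterate: inside each plateau of $g$, instead of a flat segment, insert a shrunken affine copy of $g$ itself (suitably reflected so the graph stays monotone and continuous), and repeat to infinite depth. After infinitely many substitutions one obtains a limiting $C^k$ function $f$; the set of heights $y$ at which the fibre $f^{-1}(y)$ is ``built from plateaus all the way down'' is a Cantor set $E$ in $[0,1]$ with $\dim_H E \ge 1 - \varepsilon$ (by choosing the shrink ratios at each stage slowly enough), and for each such $y$ the fibre $f^{-1}(y)$ is a Cantor set whose dimension can likewise be forced $\ge 1-\varepsilon$ — in particular $\ge \alpha$ is irrelevant; we simply need it positive, and arranging it to be exactly $1$ (or the sup over such constructions to reach $1$) gives $E \subseteq I_1(f)$. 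Then $\dim_H I_1(f) \ge \dim_H E \ge 1 - \varepsilon$.

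\textbf{Step 3: verification.} Finally I would verify three things: (a) $f$ is well-defined and $C^k$ — uniform convergence of the functions and of their first $k$ derivatives, using the scale-invariant bounds from Step 1; (b) the height set $E$ really has $\dim_H E \ge 1-\varepsilon$ — a standard mass-distribution / Frostman argument on the Cantor set $E$, controlling the ratios so that the similarity-dimension lower bound exceeds $1-\varepsilon$; (c) for $y \in E$, $\dim_H f^{-1}(y) \ge 1-\varepsilon$ (again Frostman on the fibre Cantor set). Letting $\varepsilon \to 0$ gives the supremum $1$.

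\textbf{Main obstacle.} The hard part will be Step 1–2 combined: simultaneously keeping the $C^k$ regularity \emph{and} the two dimension lower bounds (the height set $E$ and the generic fibre) close to $1$. Regularity forces the plateau transitions to occupy a definite fraction of each interval and forces derivative bounds that worsen as copies shrink; pushing dimension to $1$ forces the removed/transition intervals to be relatively tiny. Balancing these — choosing the sequence of scale ratios so the $C^k$ norms stay finite while the Hausdorff dimensions of both the diagonal height-set and the fibres tend to $1$ — is the crux, and is exactly why the statement is a supremum rather than an attained maximum.
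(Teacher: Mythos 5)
Your plan follows the same blueprint as the paper's construction --- an iterated diagonal of boxes joined by $C^k$ plateau curves, with the admissible heights forming a self-similar Cantor set $E$ --- but two load-bearing steps fail.

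First, Steps 1--2 are internally contradictory because of monotonicity. If the gadget $g$ and every substituted copy (``suitably reflected so the graph stays monotone'') are monotone, then so is the limit $f$, and every level set of a monotone function is an interval, possibly degenerate: $f(a)=f(b)$ forces $f$ to be constant on $[a,b]$. Hence each fibre is either a nondegenerate interval --- and there are only countably many such heights, since these intervals are pairwise disjoint --- or a single point. No fibre of a monotone function is a Cantor set of positive dimension, so your height set $E$ is countable and $\dim_{H} E = 0$ no matter how the ratios are tuned. To make a fibre revisit a height many times the function must oscillate; this is exactly the role of the final box in the paper's construction, which drops back to the bottom of the diagonal so that the function is \emph{not} monotone, and the paper's dimension-$1$ fibres come from literal flat segments (hence again only countably many heights), not from positive-dimensional Cantor fibres.

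Second, even after discarding monotonicity, the quantitative target of Steps 2--3 is unattainable for $k \ge 1$. By the standard slicing inequality (Falconer, \emph{Fractal Geometry}, Cor.~7.12), if every horizontal slice of a planar set $F$ through a set $E$ of heights has Hausdorff dimension at least $s$, then $\dim_{H} F \ge s + \dim_{H} E$. Taking $F$ to be the graph of $f$, whose slice at height $y$ is $f^{-1}(y)\times\{y\}$, gives $\dim_{H}\bigl(\mathrm{graph}(f)\bigr) \ge \alpha + \dim_{H} I_{\alpha}(f)$. But the graph of a $C^{1}$ (indeed Lipschitz) function on $[0,1]$ is a rectifiable curve of Hausdorff dimension exactly $1$, so $\dim_{H} I_{\alpha}(f) \le 1-\alpha$, and in particular $\dim_{H} I_{1}(f)=0$. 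Thus for $k\ge 1$ no construction can deliver both $\dim_{H} E \ge 1-\varepsilon$ and fibres of dimension $1$ over all of $E$: the ``balancing act'' you flag as the main obstacle is not merely delicate but impossible, and the verifications in Steps 3(a) and 3(c) cannot both succeed. The only room for the statement is $k=0$, where a continuous graph may have dimension up to $2$; but then you must engineer $\dim_{H}\bigl(\mathrm{graph}(f)\bigr)\ge 2-\varepsilon$, which your uniformly-bounded-derivative building blocks (with vertical contraction $\beta<1$ per level, hence a Lipschitz limit) do not produce.
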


\*

\begin{ex}(\textbf{Main Function})
Consider the following iteratively defined function.

\begin{center}
\begin{tikzpicture}[scale=1]
\draw[->] (0,0) -- (0,10.4);
\draw[->] (0,0) -- (10.4,0);


\draw (10,0) -- (10,10) -- (0,10);

\foreach \x in {0, 0.2, 0.4, 0.6, 0.8, 1}
\draw[shift={(10*\x,0)},color=black] (0pt,3pt) -- (0pt,-3pt);
\foreach \x in {0, 0.2, 0.4, 0.6, 0.8, 1}
\draw[shift={(10*\x,0)},color=black] (0pt,0pt) node[below] {$\x$};

\foreach \y in {0, 0.2, 0.4, 0.6, 0.8, 1}
\draw[shift={(0, 10*\y)},color=black] (3pt, 0pt) -- (-3pt, 0pt);
\foreach \y in {0, 0.2, 0.4, 0.6, 0.8, 1}
\draw[shift={(0, 10*\y)},color=black] (0pt,0pt) node[left] {$\y$};

\draw[<->] (7*10/9 + 0.3, 3*10/9) -- node[right]{$\frac{\beta}{m}$} ++ (0, 1*10/9); 

\draw[<->] (6*10/9, 4*10/9 + 0.3) -- node[above]{$\frac{1}{m}$} ++ (1*10/9, 0);

\draw (0,0) -- (10/9,0) -- (10/9,10/9) -- (0,10/9) -- (0,0);
\draw (0 + 2*10/9, 0 + 10/9) -- (10/9 + 2*10/9, 0 + 10/9) -- (10/9 + 2*10/9, 10/9 + 10/9) -- (0 + 2*10/9, 10/9 + 10/9)  -- (0 + 2*10/9, 0 + 10/9);
\draw (0 + 6*10/9, 0 + 3*10/9) -- (10/9 + 6*10/9, 0 + 3*10/9) -- (10/9 + 6*10/9, 10/9 + 3*10/9) -- (0 + 6*10/9, 10/9 + 3*10/9)  -- (0 + 6*10/9, 0 + 3*10/9);

\draw (0 + 8*10/9, 0 + 0*10/9) -- (10/9 + 8*10/9, 0 + 0*10/9) -- (10/9 + 8*10/9, 10/9 + 0*10/9) -- (0 + 8*10/9, 10/9 + 0*10/9)  -- (0 + 8*10/9, 0 + 0*10/9);

\draw (4*10/9 + 1/2, 2*10/9 + 1/4) node[circle,fill, inner sep=0.1pt] {.};
\draw (4*10/9 + 2*1/2, 2*10/9 + 2*1/4) node[circle,fill, inner sep=0.1pt] {.};
\draw (4*10/9 + 3*1/2, 2*10/9 + 3*1/4) node[circle,fill, inner sep=0.1pt] {.};

\draw[domain=10/9:2*10/9,smooth,variable=\x,blue] plot ({\x},{(10/9)*(1 + sin(pi*(\x*(9/10) - 1 - 1/2) r))/2});

\draw[domain=3*10/9:4*10/9,smooth,variable=\x,blue] plot ({\x},{(10/9)*(1 + sin(pi*(\x*(9/10) - (3 + 1/2)) r))/2 + 1*10/9});

\draw[domain=5.7*10/9:6*10/9,smooth,variable=\x,blue] plot ({\x},{(10/9)*(1 + sin(pi*(\x*(9/10) - (5 + 1/2)) r))/2 + 2*10/9});

\draw[domain=7*10/9:7.5*10/9,smooth,variable=\x,blue] plot ({\x},{(3*10/9)*(1 + sin(2*pi*(\x*(9/10) + 1/4 - 7) r))/2 + 0*10/9});

\draw[domain=7.5*10/9:8*10/9,smooth,variable=\x,blue] plot ({\x},{0});

\draw (0,0) -- (10/81,0) -- (10/81,10/81) -- (0,10/81) -- (0,0);

\draw (0 + 2*10/81, 0 + 10/81) -- (10/81 + 2*10/81, 0 + 10/81) -- (10/81 + 2*10/81, 10/81 + 10/81) -- (0 + 2*10/81, 10/81 + 10/81)  -- (0 + 2*10/81, 0 + 10/81);
\draw (0 + 6*10/81, 0 + 3*10/81) -- (10/81 + 6*10/81, 0 + 3*10/81) -- (10/81 + 6*10/81, 10/81 + 3*10/81) -- (0 + 6*10/81, 10/81 + 3*10/81)  -- (0 + 6*10/81, 0 + 3*10/81);

\draw (0 + 8*10/81, 0 + 0*10/81) -- (10/81 + 8*10/81, 0 + 0*10/81) -- (10/81 + 8*10/81, 10/81 + 0*10/81) -- (0 + 8*10/81, 10/81 + 0*10/81)  -- (0 + 8*10/81, 0 + 0*10/81);

\draw (4*10/81 + 1/18, 2*10/81 + 1/36) node {.};
\draw (4*10/81 + 2*1/18, 2*10/81 + 2*1/36) node[circle] {.};
\draw (4*10/81 + 3*1/18, 2*10/81 + 3*1/36) node[circle] {.};

\draw[domain=10/81:2*10/81,smooth,variable=\x,blue] plot ({\x},{(10/81)*(1 + sin(pi*(\x*(81/10) - 1 - 1/2) r))/2});

\draw[domain=3*10/81:4*10/81,smooth,variable=\x,blue] plot ({\x},{(10/81)*(1 + sin(pi*(\x*(81/10) - (3 + 1/2)) r))/2 + 1*10/81});

\draw[domain=5.7*10/81:6*10/81,smooth,variable=\x,blue] plot ({\x},{(10/81)*(1 + sin(pi*(\x*(81/10) - (5 + 1/2)) r))/2 + 2*10/81});

\draw[domain=7*10/81:7.5*10/81,smooth,variable=\x,blue] plot ({\x},{(3*10/81)*(1 + sin(2*pi*(\x*(81/10) + 1/4 - 7) r))/2 + 0*10/81});

\draw[domain=7.5*10/81:8*10/81,smooth,variable=\x,blue] plot ({\x},{0});

\draw (0 + 2*10/9,0 + 10/9) -- (10/81 + 2*10/9, 0 + 10/9) -- (10/81 + 2*10/9,10/81 + 10/9) -- (0 + 2*10/9,10/81 + 10/9) -- (0 + 2*10/9,0 + 10/9);
\draw (0 + 2*10/81 + 2*10/9, 0 + 10/81 + 10/9) -- (10/81 + 2*10/81 + 2*10/9, 0 + 10/81 + 10/9) -- (10/81 + 2*10/81 + 2*10/9, 10/81 + 10/81 + 10/9) -- (0 + 2*10/81 + 2*10/9, 10/81 + 10/81 + 10/9)  -- (0 + 2*10/81 + 2*10/9, 0 + 10/81 + 10/9);
\draw (0 + 6*10/81 + 2*10/9, 0 + 3*10/81 + 10/9) -- (10/81 + 6*10/81 + 2*10/9, 0 + 3*10/81 + 10/9) -- (10/81 + 6*10/81 + 2*10/9, 10/81 + 3*10/81 + 10/9) -- (0 + 6*10/81 + 2*10/9, 10/81 + 3*10/81 + 10/9)  -- (0 + 6*10/81 + 2*10/9, 0 + 3*10/81 + 10/9);

\draw (0 + 8*10/81 + 2*10/9, 0 + 0*10/81 + 10/9) -- (10/81 + 8*10/81 + 2*10/9, 0 + 0*10/81 + 10/9) -- (10/81 + 8*10/81 + 2*10/9, 10/81 + 0*10/81 + 10/9) -- (0 + 8*10/81 + 2*10/9, 10/81 + 0*10/81 + 10/9)  -- (0 + 8*10/81 + 2*10/9, 0 + 0*10/81 + 10/9);

\draw (4*10/81 + 1/18 + 2*10/9, 2*10/81 + 1/36 + 10/9) node {.};
\draw (4*10/81 + 2*1/18 + 2*10/9, 2*10/81 + 2*1/36 + 10/9) node[circle] {.};
\draw (4*10/81 + 3*1/18 + 2*10/9, 2*10/81 + 3*1/36 + 10/9) node[circle] {.};

\draw[domain=(10/81 + 2*10/9):(2*10/81 + 2*10/9),smooth,variable=\x,blue] plot ({\x},{(10/81)*(1 + sin(pi*(\x*(81/10) - 1 - 1/2) r))/2 + 10/9});

\draw[domain=(3*10/81 + 2*10/9):(4*10/81 + 2*10/9),smooth,variable=\x,blue] plot ({\x},{(10/81)*(1 + sin(pi*(\x*(81/10) - (3 + 1/2)) r))/2 + 1*10/81 + 10/9});

\draw[domain=(5.7*10/81 + 2*10/9):(6*10/81 + 2*10/9),smooth,variable=\x,blue] plot ({\x},{(10/81)*(1 + sin(pi*(\x*(81/10) - (5 + 1/2)) r))/2 + 2*10/81 + 10/9});

\draw[domain=(7*10/81 + 2*10/9):(7.5*10/81 + 2*10/9),smooth,variable=\x,blue] plot ({\x},{(3*10/81)*(1 + sin(2*pi*(\x*(81/10) + 1/4 - 7) r))/2 + 0*10/81 + 10/9});

\draw[domain=(7.5*10/81 + 2*10/9):(8*10/81 + 2*10/9),smooth,variable=\x,blue] plot ({\x},{0 + 10/9});

\draw (0 + 6*10/9,0 + 3*10/9) -- (10/81 + 6*10/9, 0 + 3*10/9) -- (10/81 + 6*10/9,10/81 + 3*10/9) -- (0 + 6*10/9, 10/81 + 3*10/9) -- (0 + 6*10/9,0 + 3*10/9);
\draw (0 + 2*10/81 + 6*10/9, 0 + 10/81 + 3*10/9) -- (10/81 + 2*10/81 + 6*10/9, 0 + 10/81 + 3*10/9) -- (10/81 + 2*10/81 + 6*10/9, 10/81 + 10/81 + 3*10/9) -- (0 + 2*10/81 + 6*10/9, 10/81 + 10/81 + 3*10/9)  -- (0 + 2*10/81 + 6*10/9, 0 + 10/81 + 3*10/9);
\draw (0 + 6*10/81 + 6*10/9, 0 + 3*10/81 + 3*10/9) -- (10/81 + 6*10/81 + 6*10/9, 0 + 3*10/81 + 3*10/9) -- (10/81 + 6*10/81 + 6*10/9, 10/81 + 3*10/81 + 3*10/9) -- (0 + 6*10/81 + 6*10/9, 10/81 + 3*10/81 + 3*10/9)  -- (0 + 6*10/81 + 6*10/9, 0 + 3*10/81 + 3*10/9);

\draw (0 + 8*10/81 + 6*10/9, 0 + 0*10/81 + 3*10/9) -- (10/81 + 8*10/81 + 6*10/9, 0 + 0*10/81 + 3*10/9) -- (10/81 + 8*10/81 + 6*10/9, 10/81 + 0*10/81 + 3*10/9) -- (0 + 8*10/81 + 6*10/9, 10/81 + 0*10/81 + 3*10/9)  -- (0 + 8*10/81 + 6*10/9, 0 + 0*10/81 + 3*10/9);

\draw (4*10/81 + 1/18 + 6*10/9, 2*10/81 + 1/36 + 3*10/9) node {.};
\draw (4*10/81 + 2*1/18 + 6*10/9, 2*10/81 + 2*1/36 + 3*10/9) node[circle] {.};
\draw (4*10/81 + 3*1/18 + 6*10/9, 2*10/81 + 3*1/36 + 3*10/9) node[circle] {.};

\draw[domain=(10/81 + 6*10/9):(2*10/81 + 6*10/9),smooth,variable=\x,blue] plot ({\x},{(10/81)*(1 + sin(pi*(\x*(81/10) - 1 - 1/2) r))/2 + 3*10/9});

\draw[domain=(3*10/81 + 6*10/9):(4*10/81 + 6*10/9),smooth,variable=\x,blue] plot ({\x},{(10/81)*(1 + sin(pi*(\x*(81/10) - (3 + 1/2)) r))/2 + 1*10/81 + 3*10/9});

\draw[domain=(5.7*10/81 + 6*10/9):(6*10/81 + 6*10/9),smooth,variable=\x,blue] plot ({\x},{(10/81)*(1 + sin(pi*(\x*(81/10) - (5 + 1/2)) r))/2 + 2*10/81 + 30/9});

\begin{scope}[shift={(6*10/9,0)}]
\draw[domain=(7*10/81 + 0*10/9):(7.5*10/81 + 0*10/9), smooth, variable=\x,blue] plot ({\x}, {3*(10/81)*(1/2)*(1 + cos(2*pi*\x*(81/10) r)) + 3*10/9});
\end{scope}

\draw[domain=(7.5*10/81 + 6*10/9):(8*10/81 + 6*10/9),smooth,variable=\x,blue] plot ({\x},{0 + 30/9});

\begin{scope}[shift={(8*10/9,0)}]
\draw (0,0) -- (10/81,0) -- (10/81,10/81) -- (0,10/81) -- (0,0);
\draw (0 + 2*10/81, 0 + 10/81) -- (10/81 + 2*10/81, 0 + 10/81) -- (10/81 + 2*10/81, 10/81 + 10/81) -- (0 + 2*10/81, 10/81 + 10/81)  -- (0 + 2*10/81, 0 + 10/81);
\draw (0 + 6*10/81, 0 + 3*10/81) -- (10/81 + 6*10/81, 0 + 3*10/81) -- (10/81 + 6*10/81, 10/81 + 3*10/81) -- (0 + 6*10/81, 10/81 + 3*10/81)  -- (0 + 6*10/81, 0 + 3*10/81);

\draw (0 + 8*10/81, 0 + 0*10/81) -- (10/81 + 8*10/81, 0 + 0*10/81) -- (10/81 + 8*10/81, 10/81 + 0*10/81) -- (0 + 8*10/81, 10/81 + 0*10/81)  -- (0 + 8*10/81, 0 + 0*10/81);

\draw (4*10/81 + 1/18, 2*10/81 + 1/36) node {.};
\draw (4*10/81 + 2*1/18, 2*10/81 + 2*1/36) node[circle] {.};
\draw (4*10/81 + 3*1/18, 2*10/81 + 3*1/36) node[circle] {.};

\draw[domain=10/81:2*10/81,smooth,variable=\x,blue] plot ({\x},{(10/81)*(1 + sin(pi*(\x*(81/10) - 1 - 1/2) r))/2});

\draw[domain=3*10/81:4*10/81,smooth,variable=\x,blue] plot ({\x},{(10/81)*(1 + sin(pi*(\x*(81/10) - (3 + 1/2)) r))/2 + 1*10/81});

\draw[domain=5.7*10/81:6*10/81,smooth,variable=\x,blue] plot ({\x},{(10/81)*(1 + sin(pi*(\x*(81/10) - (5 + 1/2)) r))/2 + 2*10/81});

\draw[domain=7*10/81:7.5*10/81,smooth,variable=\x,blue] plot ({\x},{(3*10/81)*(1 + sin(2*pi*(\x*(81/10) + 1/4 - 7) r))/2 + 0*10/81});

\draw[domain=7.5*10/81:8*10/81,smooth,variable=\x,blue] plot ({\x},{0});
\end{scope}

\draw (0,0) -- (10/729,0) -- (10/729, 10/729) -- (0,10/729) -- (0,0);
\draw (0 + 2*10/729, 0 + 10/729) -- (10/729 + 2*10/729, 0 + 10/729) -- (10/729 + 2*10/729, 10/729 + 10/729) -- (0 + 2*10/729, 10/729 + 10/729)  -- (0 + 2*10/729, 0 + 10/729);
\draw (0 + 6*10/729, 0 + 3*10/729) -- (10/729 + 6*10/729, 0 + 3*10/729) -- (10/729 + 6*10/729, 10/729 + 3*10/729) -- (0 + 6*10/729, 10/729 + 3*10/729)  -- (0 + 6*10/729, 0 + 3*10/729);

\draw (0 + 8*10/729, 0 + 0*10/729) -- (10/729 + 8*10/729, 0 + 0*10/729) -- (10/729 + 8*10/729, 10/729 + 0*10/729) -- (0 + 8*10/729, 10/729 + 0*10/729)  -- (0 + 8*10/729, 0 + 0*10/729);

\draw[loosely dotted] (4*10/729 + 0/162, 2*10/729 + 0/324) -- (4*10/729 + 5/162, 2*10/729 + 5/324);

\draw[domain=10/729:2*10/729,smooth,variable=\x,blue] plot ({\x},{(10/729)*(1 + sin(pi*(\x*(729/10) - 1 - 1/2) r))/2});

\draw[domain=3*10/729:4*10/729,smooth,variable=\x,blue] plot ({\x},{(10/729)*(1 + sin(pi*(\x*(729/10) - (3 + 1/2)) r))/2 + 1*10/729});

\draw[domain=5.7*10/729:6*10/729,smooth,variable=\x,blue] plot ({\x},{(10/729)*(1 + sin(pi*(\x*(729/10) - (5 + 1/2)) r))/2 + 2*10/729});

\draw[domain=7*10/729:7.5*10/729,smooth,variable=\x,blue] plot ({\x},{(3*10/729)*(1 + sin(2*pi*(\x*(729/10) + 1/4 - 7) r))/2 + 0*10/729});

\draw[domain=7.5*10/729:8*10/729,smooth,variable=\x,blue] plot ({\x},{0});

\begin{scope}[shift={(2*10/81,1*10/81)}]
\draw (0,0) -- (10/729,0) -- (10/729, 10/729) -- (0,10/729) -- (0,0);
\draw (0 + 2*10/729, 0 + 10/729) -- (10/729 + 2*10/729, 0 + 10/729) -- (10/729 + 2*10/729, 10/729 + 10/729) -- (0 + 2*10/729, 10/729 + 10/729)  -- (0 + 2*10/729, 0 + 10/729);
\draw (0 + 6*10/729, 0 + 3*10/729) -- (10/729 + 6*10/729, 0 + 3*10/729) -- (10/729 + 6*10/729, 10/729 + 3*10/729) -- (0 + 6*10/729, 10/729 + 3*10/729)  -- (0 + 6*10/729, 0 + 3*10/729);

\draw (0 + 8*10/729, 0 + 0*10/729) -- (10/729 + 8*10/729, 0 + 0*10/729) -- (10/729 + 8*10/729, 10/729 + 0*10/729) -- (0 + 8*10/729, 10/729 + 0*10/729)  -- (0 + 8*10/729, 0 + 0*10/729);

\draw[loosely dotted] (4*10/729 + 0/162, 2*10/729 + 0/324) -- (4*10/729 + 5/162, 2*10/729 + 5/324);

\draw[domain=10/729:2*10/729,smooth,variable=\x,blue] plot ({\x},{(10/729)*(1 + sin(pi*(\x*(729/10) - 1 - 1/2) r))/2});

\draw[domain=3*10/729:4*10/729,smooth,variable=\x,blue] plot ({\x},{(10/729)*(1 + sin(pi*(\x*(729/10) - (3 + 1/2)) r))/2 + 1*10/729});

\draw[domain=5.7*10/729:6*10/729,smooth,variable=\x,blue] plot ({\x},{(10/729)*(1 + sin(pi*(\x*(729/10) - (5 + 1/2)) r))/2 + 2*10/729});

\draw[domain=7*10/729:7.5*10/729,smooth,variable=\x,blue] plot ({\x},{(3*10/729)*(1 + sin(2*pi*(\x*(729/10) + 1/4 - 7) r))/2 + 0*10/729});

\draw[domain=7.5*10/729:8*10/729,smooth,variable=\x,blue] plot ({\x},{0});

\end{scope}

\begin{scope}[shift={(6*10/81,3*10/81)}]
\draw (0,0) -- (10/729,0) -- (10/729, 10/729) -- (0,10/729) -- (0,0);
\draw (0 + 2*10/729, 0 + 10/729) -- (10/729 + 2*10/729, 0 + 10/729) -- (10/729 + 2*10/729, 10/729 + 10/729) -- (0 + 2*10/729, 10/729 + 10/729)  -- (0 + 2*10/729, 0 + 10/729);
\draw (0 + 6*10/729, 0 + 3*10/729) -- (10/729 + 6*10/729, 0 + 3*10/729) -- (10/729 + 6*10/729, 10/729 + 3*10/729) -- (0 + 6*10/729, 10/729 + 3*10/729)  -- (0 + 6*10/729, 0 + 3*10/729);

\draw (0 + 8*10/729, 0 + 0*10/729) -- (10/729 + 8*10/729, 0 + 0*10/729) -- (10/729 + 8*10/729, 10/729 + 0*10/729) -- (0 + 8*10/729, 10/729 + 0*10/729)  -- (0 + 8*10/729, 0 + 0*10/729);

\draw[loosely dotted] (4*10/729 + 0/162, 2*10/729 + 0/324) -- (4*10/729 + 5/162, 2*10/729 + 5/324);

\draw[domain=10/729:2*10/729,smooth,variable=\x,blue] plot ({\x},{(10/729)*(1 + sin(pi*(\x*(729/10) - 1 - 1/2) r))/2});

\draw[domain=3*10/729:4*10/729,smooth,variable=\x,blue] plot ({\x},{(10/729)*(1 + sin(pi*(\x*(729/10) - (3 + 1/2)) r))/2 + 1*10/729});

\draw[domain=5.7*10/729:6*10/729,smooth,variable=\x,blue] plot ({\x},{(10/729)*(1 + sin(pi*(\x*(729/10) - (5 + 1/2)) r))/2 + 2*10/729});

\draw[domain=7*10/729:7.5*10/729,smooth,variable=\x,blue] plot ({\x},{(3*10/729)*(1 + sin(2*pi*(\x*(729/10) + 1/4 - 7) r))/2 + 0*10/729});

\draw[domain=7.5*10/729:8*10/729,smooth,variable=\x,blue] plot ({\x},{0});
\end{scope}

\begin{scope}[shift={(8*10/81,0*10/81)}]
\draw (0,0) -- (10/729,0) -- (10/729, 10/729) -- (0,10/729) -- (0,0);
\draw (0 + 2*10/729, 0 + 10/729) -- (10/729 + 2*10/729, 0 + 10/729) -- (10/729 + 2*10/729, 10/729 + 10/729) -- (0 + 2*10/729, 10/729 + 10/729)  -- (0 + 2*10/729, 0 + 10/729);
\draw (0 + 6*10/729, 0 + 3*10/729) -- (10/729 + 6*10/729, 0 + 3*10/729) -- (10/729 + 6*10/729, 10/729 + 3*10/729) -- (0 + 6*10/729, 10/729 + 3*10/729)  -- (0 + 6*10/729, 0 + 3*10/729);

\draw (0 + 8*10/729, 0 + 0*10/729) -- (10/729 + 8*10/729, 0 + 0*10/729) -- (10/729 + 8*10/729, 10/729 + 0*10/729) -- (0 + 8*10/729, 10/729 + 0*10/729)  -- (0 + 8*10/729, 0 + 0*10/729);

\draw[loosely dotted] (4*10/729 + 0/162, 2*10/729 + 0/324) -- (4*10/729 + 5/162, 2*10/729 + 5/324);

\draw[domain=10/729:2*10/729,smooth,variable=\x,blue] plot ({\x},{(10/729)*(1 + sin(pi*(\x*(729/10) - 1 - 1/2) r))/2});

\draw[domain=3*10/729:4*10/729,smooth,variable=\x,blue] plot ({\x},{(10/729)*(1 + sin(pi*(\x*(729/10) - (3 + 1/2)) r))/2 + 1*10/729});

\draw[domain=5.7*10/729:6*10/729,smooth,variable=\x,blue] plot ({\x},{(10/729)*(1 + sin(pi*(\x*(729/10) - (5 + 1/2)) r))/2 + 2*10/729});

\draw[domain=7*10/729:7.5*10/729,smooth,variable=\x,blue] plot ({\x},{(3*10/729)*(1 + sin(2*pi*(\x*(729/10) + 1/4 - 7) r))/2 + 0*10/729});

\draw[domain=7.5*10/729:8*10/729,smooth,variable=\x,blue] plot ({\x},{0});
\end{scope}

\begin{scope}[shift={(2*10/9, 1*10/9)}]
\draw (0,0) -- (10/729,0) -- (10/729, 10/729) -- (0,10/729) -- (0,0);
\draw (0 + 2*10/729, 0 + 10/729) -- (10/729 + 2*10/729, 0 + 10/729) -- (10/729 + 2*10/729, 10/729 + 10/729) -- (0 + 2*10/729, 10/729 + 10/729)  -- (0 + 2*10/729, 0 + 10/729);
\draw (0 + 6*10/729, 0 + 3*10/729) -- (10/729 + 6*10/729, 0 + 3*10/729) -- (10/729 + 6*10/729, 10/729 + 3*10/729) -- (0 + 6*10/729, 10/729 + 3*10/729)  -- (0 + 6*10/729, 0 + 3*10/729);

\draw (0 + 8*10/729, 0 + 0*10/729) -- (10/729 + 8*10/729, 0 + 0*10/729) -- (10/729 + 8*10/729, 10/729 + 0*10/729) -- (0 + 8*10/729, 10/729 + 0*10/729)  -- (0 + 8*10/729, 0 + 0*10/729);

\draw[loosely dotted] (4*10/729 + 0/162, 2*10/729 + 0/324) -- (4*10/729 + 5/162, 2*10/729 + 5/324);

\draw[domain=10/729:2*10/729,smooth,variable=\x,blue] plot ({\x},{(10/729)*(1 + sin(pi*(\x*(729/10) - 1 - 1/2) r))/2});

\draw[domain=3*10/729:4*10/729,smooth,variable=\x,blue] plot ({\x},{(10/729)*(1 + sin(pi*(\x*(729/10) - (3 + 1/2)) r))/2 + 1*10/729});

\draw[domain=5.7*10/729:6*10/729,smooth,variable=\x,blue] plot ({\x},{(10/729)*(1 + sin(pi*(\x*(729/10) - (5 + 1/2)) r))/2 + 2*10/729});

\draw[domain=7*10/729:7.5*10/729,smooth,variable=\x,blue] plot ({\x},{(3*10/729)*(1 + sin(2*pi*(\x*(729/10) + 1/4 - 7) r))/2 + 0*10/729});

\draw[domain=7.5*10/729:8*10/729,smooth,variable=\x,blue] plot ({\x},{0});

\begin{scope}[shift={(2*10/81,1*10/81)}]
\draw (0,0) -- (10/729,0) -- (10/729, 10/729) -- (0,10/729) -- (0,0);
\draw (0 + 2*10/729, 0 + 10/729) -- (10/729 + 2*10/729, 0 + 10/729) -- (10/729 + 2*10/729, 10/729 + 10/729) -- (0 + 2*10/729, 10/729 + 10/729)  -- (0 + 2*10/729, 0 + 10/729);
\draw (0 + 6*10/729, 0 + 3*10/729) -- (10/729 + 6*10/729, 0 + 3*10/729) -- (10/729 + 6*10/729, 10/729 + 3*10/729) -- (0 + 6*10/729, 10/729 + 3*10/729)  -- (0 + 6*10/729, 0 + 3*10/729);

\draw (0 + 8*10/729, 0 + 0*10/729) -- (10/729 + 8*10/729, 0 + 0*10/729) -- (10/729 + 8*10/729, 10/729 + 0*10/729) -- (0 + 8*10/729, 10/729 + 0*10/729)  -- (0 + 8*10/729, 0 + 0*10/729);

\draw[loosely dotted] (4*10/729 + 0/162, 2*10/729 + 0/324) -- (4*10/729 + 5/162, 2*10/729 + 5/324);

\draw[domain=10/729:2*10/729,smooth,variable=\x,blue] plot ({\x},{(10/729)*(1 + sin(pi*(\x*(729/10) - 1 - 1/2) r))/2});

\draw[domain=3*10/729:4*10/729,smooth,variable=\x,blue] plot ({\x},{(10/729)*(1 + sin(pi*(\x*(729/10) - (3 + 1/2)) r))/2 + 1*10/729});

\draw[domain=5.7*10/729:6*10/729,smooth,variable=\x,blue] plot ({\x},{(10/729)*(1 + sin(pi*(\x*(729/10) - (5 + 1/2)) r))/2 + 2*10/729});

\draw[domain=7*10/729:7.5*10/729,smooth,variable=\x,blue] plot ({\x},{(3*10/729)*(1 + sin(2*pi*(\x*(729/10) + 1/4 - 7) r))/2 + 0*10/729});

\draw[domain=7.5*10/729:8*10/729,smooth,variable=\x,blue] plot ({\x},{0});

\end{scope}

\begin{scope}[shift={(6*10/81,3*10/81)}]
\draw (0,0) -- (10/729,0) -- (10/729, 10/729) -- (0,10/729) -- (0,0);
\draw (0 + 2*10/729, 0 + 10/729) -- (10/729 + 2*10/729, 0 + 10/729) -- (10/729 + 2*10/729, 10/729 + 10/729) -- (0 + 2*10/729, 10/729 + 10/729)  -- (0 + 2*10/729, 0 + 10/729);
\draw (0 + 6*10/729, 0 + 3*10/729) -- (10/729 + 6*10/729, 0 + 3*10/729) -- (10/729 + 6*10/729, 10/729 + 3*10/729) -- (0 + 6*10/729, 10/729 + 3*10/729)  -- (0 + 6*10/729, 0 + 3*10/729);

\draw (0 + 8*10/729, 0 + 0*10/729) -- (10/729 + 8*10/729, 0 + 0*10/729) -- (10/729 + 8*10/729, 10/729 + 0*10/729) -- (0 + 8*10/729, 10/729 + 0*10/729)  -- (0 + 8*10/729, 0 + 0*10/729);

\draw[loosely dotted] (4*10/729 + 0/162, 2*10/729 + 0/324) -- (4*10/729 + 5/162, 2*10/729 + 5/324);

\draw[domain=10/729:2*10/729,smooth,variable=\x,blue] plot ({\x},{(10/729)*(1 + sin(pi*(\x*(729/10) - 1 - 1/2) r))/2});

\draw[domain=3*10/729:4*10/729,smooth,variable=\x,blue] plot ({\x},{(10/729)*(1 + sin(pi*(\x*(729/10) - (3 + 1/2)) r))/2 + 1*10/729});

\draw[domain=5.7*10/729:6*10/729,smooth,variable=\x,blue] plot ({\x},{(10/729)*(1 + sin(pi*(\x*(729/10) - (5 + 1/2)) r))/2 + 2*10/729});

\draw[domain=7*10/729:7.5*10/729,smooth,variable=\x,blue] plot ({\x},{(3*10/729)*(1 + sin(2*pi*(\x*(729/10) + 1/4 - 7) r))/2 + 0*10/729});

\draw[domain=7.5*10/729:8*10/729,smooth,variable=\x,blue] plot ({\x},{0});
\end{scope}

\begin{scope}[shift={(8*10/81,0*10/81)}]
\draw (0,0) -- (10/729,0) -- (10/729, 10/729) -- (0,10/729) -- (0,0);
\draw (0 + 2*10/729, 0 + 10/729) -- (10/729 + 2*10/729, 0 + 10/729) -- (10/729 + 2*10/729, 10/729 + 10/729) -- (0 + 2*10/729, 10/729 + 10/729)  -- (0 + 2*10/729, 0 + 10/729);
\draw (0 + 6*10/729, 0 + 3*10/729) -- (10/729 + 6*10/729, 0 + 3*10/729) -- (10/729 + 6*10/729, 10/729 + 3*10/729) -- (0 + 6*10/729, 10/729 + 3*10/729)  -- (0 + 6*10/729, 0 + 3*10/729);

\draw (0 + 8*10/729, 0 + 0*10/729) -- (10/729 + 8*10/729, 0 + 0*10/729) -- (10/729 + 8*10/729, 10/729 + 0*10/729) -- (0 + 8*10/729, 10/729 + 0*10/729)  -- (0 + 8*10/729, 0 + 0*10/729);

\draw[loosely dotted] (4*10/729 + 0/162, 2*10/729 + 0/324) -- (4*10/729 + 5/162, 2*10/729 + 5/324);

\draw[domain=10/729:2*10/729,smooth,variable=\x,blue] plot ({\x},{(10/729)*(1 + sin(pi*(\x*(729/10) - 1 - 1/2) r))/2});

\draw[domain=3*10/729:4*10/729,smooth,variable=\x,blue] plot ({\x},{(10/729)*(1 + sin(pi*(\x*(729/10) - (3 + 1/2)) r))/2 + 1*10/729});

\draw[domain=5.7*10/729:6*10/729,smooth,variable=\x,blue] plot ({\x},{(10/729)*(1 + sin(pi*(\x*(729/10) - (5 + 1/2)) r))/2 + 2*10/729});

\draw[domain=7*10/729:7.5*10/729,smooth,variable=\x,blue] plot ({\x},{(3*10/729)*(1 + sin(2*pi*(\x*(729/10) + 1/4 - 7) r))/2 + 0*10/729});

\draw[domain=7.5*10/729:8*10/729,smooth,variable=\x,blue] plot ({\x},{0});
\end{scope}

\end{scope}

\begin{scope}[shift={(6*10/9, 3*10/9)}]
\draw (0,0) -- (10/729,0) -- (10/729, 10/729) -- (0,10/729) -- (0,0);
\draw (0 + 2*10/729, 0 + 10/729) -- (10/729 + 2*10/729, 0 + 10/729) -- (10/729 + 2*10/729, 10/729 + 10/729) -- (0 + 2*10/729, 10/729 + 10/729)  -- (0 + 2*10/729, 0 + 10/729);
\draw (0 + 6*10/729, 0 + 3*10/729) -- (10/729 + 6*10/729, 0 + 3*10/729) -- (10/729 + 6*10/729, 10/729 + 3*10/729) -- (0 + 6*10/729, 10/729 + 3*10/729)  -- (0 + 6*10/729, 0 + 3*10/729);

\draw (0 + 8*10/729, 0 + 0*10/729) -- (10/729 + 8*10/729, 0 + 0*10/729) -- (10/729 + 8*10/729, 10/729 + 0*10/729) -- (0 + 8*10/729, 10/729 + 0*10/729)  -- (0 + 8*10/729, 0 + 0*10/729);

\draw[loosely dotted] (4*10/729 + 0/162, 2*10/729 + 0/324) -- (4*10/729 + 5/162, 2*10/729 + 5/324);

\draw[domain=10/729:2*10/729,smooth,variable=\x,blue] plot ({\x},{(10/729)*(1 + sin(pi*(\x*(729/10) - 1 - 1/2) r))/2});

\draw[domain=3*10/729:4*10/729,smooth,variable=\x,blue] plot ({\x},{(10/729)*(1 + sin(pi*(\x*(729/10) - (3 + 1/2)) r))/2 + 1*10/729});

\draw[domain=5.7*10/729:6*10/729,smooth,variable=\x,blue] plot ({\x},{(10/729)*(1 + sin(pi*(\x*(729/10) - (5 + 1/2)) r))/2 + 2*10/729});

\draw[domain=7*10/729:7.5*10/729,smooth,variable=\x,blue] plot ({\x},{(3*10/729)*(1 + sin(2*pi*(\x*(729/10) + 1/4 - 7) r))/2 + 0*10/729});

\draw[domain=7.5*10/729:8*10/729,smooth,variable=\x,blue] plot ({\x},{0});

\begin{scope}[shift={(2*10/81,1*10/81)}]
\draw (0,0) -- (10/729,0) -- (10/729, 10/729) -- (0,10/729) -- (0,0);
\draw (0 + 2*10/729, 0 + 10/729) -- (10/729 + 2*10/729, 0 + 10/729) -- (10/729 + 2*10/729, 10/729 + 10/729) -- (0 + 2*10/729, 10/729 + 10/729)  -- (0 + 2*10/729, 0 + 10/729);
\draw (0 + 6*10/729, 0 + 3*10/729) -- (10/729 + 6*10/729, 0 + 3*10/729) -- (10/729 + 6*10/729, 10/729 + 3*10/729) -- (0 + 6*10/729, 10/729 + 3*10/729)  -- (0 + 6*10/729, 0 + 3*10/729);

\draw (0 + 8*10/729, 0 + 0*10/729) -- (10/729 + 8*10/729, 0 + 0*10/729) -- (10/729 + 8*10/729, 10/729 + 0*10/729) -- (0 + 8*10/729, 10/729 + 0*10/729)  -- (0 + 8*10/729, 0 + 0*10/729);

\draw[loosely dotted] (4*10/729 + 0/162, 2*10/729 + 0/324) -- (4*10/729 + 5/162, 2*10/729 + 5/324);

\draw[domain=10/729:2*10/729,smooth,variable=\x,blue] plot ({\x},{(10/729)*(1 + sin(pi*(\x*(729/10) - 1 - 1/2) r))/2});

\draw[domain=3*10/729:4*10/729,smooth,variable=\x,blue] plot ({\x},{(10/729)*(1 + sin(pi*(\x*(729/10) - (3 + 1/2)) r))/2 + 1*10/729});

\draw[domain=5.7*10/729:6*10/729,smooth,variable=\x,blue] plot ({\x},{(10/729)*(1 + sin(pi*(\x*(729/10) - (5 + 1/2)) r))/2 + 2*10/729});

\draw[domain=7*10/729:7.5*10/729,smooth,variable=\x,blue] plot ({\x},{(3*10/729)*(1 + sin(2*pi*(\x*(729/10) + 1/4 - 7) r))/2 + 0*10/729});

\draw[domain=7.5*10/729:8*10/729,smooth,variable=\x,blue] plot ({\x},{0});

\end{scope}

\begin{scope}[shift={(6*10/81,3*10/81)}]
\draw (0,0) -- (10/729,0) -- (10/729, 10/729) -- (0,10/729) -- (0,0);
\draw (0 + 2*10/729, 0 + 10/729) -- (10/729 + 2*10/729, 0 + 10/729) -- (10/729 + 2*10/729, 10/729 + 10/729) -- (0 + 2*10/729, 10/729 + 10/729)  -- (0 + 2*10/729, 0 + 10/729);
\draw (0 + 6*10/729, 0 + 3*10/729) -- (10/729 + 6*10/729, 0 + 3*10/729) -- (10/729 + 6*10/729, 10/729 + 3*10/729) -- (0 + 6*10/729, 10/729 + 3*10/729)  -- (0 + 6*10/729, 0 + 3*10/729);

\draw (0 + 8*10/729, 0 + 0*10/729) -- (10/729 + 8*10/729, 0 + 0*10/729) -- (10/729 + 8*10/729, 10/729 + 0*10/729) -- (0 + 8*10/729, 10/729 + 0*10/729)  -- (0 + 8*10/729, 0 + 0*10/729);

\draw[loosely dotted] (4*10/729 + 0/162, 2*10/729 + 0/324) -- (4*10/729 + 5/162, 2*10/729 + 5/324);

\draw[domain=10/729:2*10/729,smooth,variable=\x,blue] plot ({\x},{(10/729)*(1 + sin(pi*(\x*(729/10) - 1 - 1/2) r))/2});

\draw[domain=3*10/729:4*10/729,smooth,variable=\x,blue] plot ({\x},{(10/729)*(1 + sin(pi*(\x*(729/10) - (3 + 1/2)) r))/2 + 1*10/729});

\draw[domain=5.7*10/729:6*10/729,smooth,variable=\x,blue] plot ({\x},{(10/729)*(1 + sin(pi*(\x*(729/10) - (5 + 1/2)) r))/2 + 2*10/729});

\draw[domain=7*10/729:7.5*10/729,smooth,variable=\x,blue] plot ({\x},{(3*10/729)*(1 + sin(2*pi*(\x*(729/10) + 1/4 - 7) r))/2 + 0*10/729});

\draw[domain=7.5*10/729:8*10/729,smooth,variable=\x,blue] plot ({\x},{0});
\end{scope}

\begin{scope}[shift={(8*10/81,0*10/81)}]
\draw (0,0) -- (10/729,0) -- (10/729, 10/729) -- (0,10/729) -- (0,0);
\draw (0 + 2*10/729, 0 + 10/729) -- (10/729 + 2*10/729, 0 + 10/729) -- (10/729 + 2*10/729, 10/729 + 10/729) -- (0 + 2*10/729, 10/729 + 10/729)  -- (0 + 2*10/729, 0 + 10/729);
\draw (0 + 6*10/729, 0 + 3*10/729) -- (10/729 + 6*10/729, 0 + 3*10/729) -- (10/729 + 6*10/729, 10/729 + 3*10/729) -- (0 + 6*10/729, 10/729 + 3*10/729)  -- (0 + 6*10/729, 0 + 3*10/729);

\draw (0 + 8*10/729, 0 + 0*10/729) -- (10/729 + 8*10/729, 0 + 0*10/729) -- (10/729 + 8*10/729, 10/729 + 0*10/729) -- (0 + 8*10/729, 10/729 + 0*10/729)  -- (0 + 8*10/729, 0 + 0*10/729);

\draw[loosely dotted] (4*10/729 + 0/162, 2*10/729 + 0/324) -- (4*10/729 + 5/162, 2*10/729 + 5/324);

\draw[domain=10/729:2*10/729,smooth,variable=\x,blue] plot ({\x},{(10/729)*(1 + sin(pi*(\x*(729/10) - 1 - 1/2) r))/2});

\draw[domain=3*10/729:4*10/729,smooth,variable=\x,blue] plot ({\x},{(10/729)*(1 + sin(pi*(\x*(729/10) - (3 + 1/2)) r))/2 + 1*10/729});

\draw[domain=5.7*10/729:6*10/729,smooth,variable=\x,blue] plot ({\x},{(10/729)*(1 + sin(pi*(\x*(729/10) - (5 + 1/2)) r))/2 + 2*10/729});

\draw[domain=7*10/729:7.5*10/729,smooth,variable=\x,blue] plot ({\x},{(3*10/729)*(1 + sin(2*pi*(\x*(729/10) + 1/4 - 7) r))/2 + 0*10/729});

\draw[domain=7.5*10/729:8*10/729,smooth,variable=\x,blue] plot ({\x},{0});
\end{scope}

\end{scope}

\begin{scope}[shift={(8*10/9, 0*10/9)}]
\draw (0,0) -- (10/729,0) -- (10/729, 10/729) -- (0,10/729) -- (0,0);
\draw (0 + 2*10/729, 0 + 10/729) -- (10/729 + 2*10/729, 0 + 10/729) -- (10/729 + 2*10/729, 10/729 + 10/729) -- (0 + 2*10/729, 10/729 + 10/729)  -- (0 + 2*10/729, 0 + 10/729);
\draw (0 + 6*10/729, 0 + 3*10/729) -- (10/729 + 6*10/729, 0 + 3*10/729) -- (10/729 + 6*10/729, 10/729 + 3*10/729) -- (0 + 6*10/729, 10/729 + 3*10/729)  -- (0 + 6*10/729, 0 + 3*10/729);

\draw (0 + 8*10/729, 0 + 0*10/729) -- (10/729 + 8*10/729, 0 + 0*10/729) -- (10/729 + 8*10/729, 10/729 + 0*10/729) -- (0 + 8*10/729, 10/729 + 0*10/729)  -- (0 + 8*10/729, 0 + 0*10/729);

\draw[loosely dotted] (4*10/729 + 0/162, 2*10/729 + 0/324) -- (4*10/729 + 5/162, 2*10/729 + 5/324);

\draw[domain=10/729:2*10/729,smooth,variable=\x,blue] plot ({\x},{(10/729)*(1 + sin(pi*(\x*(729/10) - 1 - 1/2) r))/2});

\draw[domain=3*10/729:4*10/729,smooth,variable=\x,blue] plot ({\x},{(10/729)*(1 + sin(pi*(\x*(729/10) - (3 + 1/2)) r))/2 + 1*10/729});

\draw[domain=5.7*10/729:6*10/729,smooth,variable=\x,blue] plot ({\x},{(10/729)*(1 + sin(pi*(\x*(729/10) - (5 + 1/2)) r))/2 + 2*10/729});

\draw[domain=7*10/729:7.5*10/729,smooth,variable=\x,blue] plot ({\x},{(3*10/729)*(1 + sin(2*pi*(\x*(729/10) + 1/4 - 7) r))/2 + 0*10/729});

\draw[domain=7.5*10/729:8*10/729,smooth,variable=\x,blue] plot ({\x},{0});

\begin{scope}[shift={(2*10/81,1*10/81)}]
\draw (0,0) -- (10/729,0) -- (10/729, 10/729) -- (0,10/729) -- (0,0);
\draw (0 + 2*10/729, 0 + 10/729) -- (10/729 + 2*10/729, 0 + 10/729) -- (10/729 + 2*10/729, 10/729 + 10/729) -- (0 + 2*10/729, 10/729 + 10/729)  -- (0 + 2*10/729, 0 + 10/729);
\draw (0 + 6*10/729, 0 + 3*10/729) -- (10/729 + 6*10/729, 0 + 3*10/729) -- (10/729 + 6*10/729, 10/729 + 3*10/729) -- (0 + 6*10/729, 10/729 + 3*10/729)  -- (0 + 6*10/729, 0 + 3*10/729);

\draw (0 + 8*10/729, 0 + 0*10/729) -- (10/729 + 8*10/729, 0 + 0*10/729) -- (10/729 + 8*10/729, 10/729 + 0*10/729) -- (0 + 8*10/729, 10/729 + 0*10/729)  -- (0 + 8*10/729, 0 + 0*10/729);

\draw[loosely dotted] (4*10/729 + 0/162, 2*10/729 + 0/324) -- (4*10/729 + 5/162, 2*10/729 + 5/324);

\draw[domain=10/729:2*10/729,smooth,variable=\x,blue] plot ({\x},{(10/729)*(1 + sin(pi*(\x*(729/10) - 1 - 1/2) r))/2});

\draw[domain=3*10/729:4*10/729,smooth,variable=\x,blue] plot ({\x},{(10/729)*(1 + sin(pi*(\x*(729/10) - (3 + 1/2)) r))/2 + 1*10/729});

\draw[domain=5.7*10/729:6*10/729,smooth,variable=\x,blue] plot ({\x},{(10/729)*(1 + sin(pi*(\x*(729/10) - (5 + 1/2)) r))/2 + 2*10/729});

\draw[domain=7*10/729:7.5*10/729,smooth,variable=\x,blue] plot ({\x},{(3*10/729)*(1 + sin(2*pi*(\x*(729/10) + 1/4 - 7) r))/2 + 0*10/729});

\draw[domain=7.5*10/729:8*10/729,smooth,variable=\x,blue] plot ({\x},{0});

\end{scope}

\begin{scope}[shift={(6*10/81,3*10/81)}]
\draw (0,0) -- (10/729,0) -- (10/729, 10/729) -- (0,10/729) -- (0,0);
\draw (0 + 2*10/729, 0 + 10/729) -- (10/729 + 2*10/729, 0 + 10/729) -- (10/729 + 2*10/729, 10/729 + 10/729) -- (0 + 2*10/729, 10/729 + 10/729)  -- (0 + 2*10/729, 0 + 10/729);
\draw (0 + 6*10/729, 0 + 3*10/729) -- (10/729 + 6*10/729, 0 + 3*10/729) -- (10/729 + 6*10/729, 10/729 + 3*10/729) -- (0 + 6*10/729, 10/729 + 3*10/729)  -- (0 + 6*10/729, 0 + 3*10/729);

\draw (0 + 8*10/729, 0 + 0*10/729) -- (10/729 + 8*10/729, 0 + 0*10/729) -- (10/729 + 8*10/729, 10/729 + 0*10/729) -- (0 + 8*10/729, 10/729 + 0*10/729)  -- (0 + 8*10/729, 0 + 0*10/729);

\draw[loosely dotted] (4*10/729 + 0/162, 2*10/729 + 0/324) -- (4*10/729 + 5/162, 2*10/729 + 5/324);

\draw[domain=10/729:2*10/729,smooth,variable=\x,blue] plot ({\x},{(10/729)*(1 + sin(pi*(\x*(729/10) - 1 - 1/2) r))/2});

\draw[domain=3*10/729:4*10/729,smooth,variable=\x,blue] plot ({\x},{(10/729)*(1 + sin(pi*(\x*(729/10) - (3 + 1/2)) r))/2 + 1*10/729});

\draw[domain=5.7*10/729:6*10/729,smooth,variable=\x,blue] plot ({\x},{(10/729)*(1 + sin(pi*(\x*(729/10) - (5 + 1/2)) r))/2 + 2*10/729});

\draw[domain=7*10/729:7.5*10/729,smooth,variable=\x,blue] plot ({\x},{(3*10/729)*(1 + sin(2*pi*(\x*(729/10) + 1/4 - 7) r))/2 + 0*10/729});

\draw[domain=7.5*10/729:8*10/729,smooth,variable=\x,blue] plot ({\x},{0});
\end{scope}

\begin{scope}[shift={(8*10/81,0*10/81)}]
\draw (0,0) -- (10/729,0) -- (10/729, 10/729) -- (0,10/729) -- (0,0);
\draw (0 + 2*10/729, 0 + 10/729) -- (10/729 + 2*10/729, 0 + 10/729) -- (10/729 + 2*10/729, 10/729 + 10/729) -- (0 + 2*10/729, 10/729 + 10/729)  -- (0 + 2*10/729, 0 + 10/729);
\draw (0 + 6*10/729, 0 + 3*10/729) -- (10/729 + 6*10/729, 0 + 3*10/729) -- (10/729 + 6*10/729, 10/729 + 3*10/729) -- (0 + 6*10/729, 10/729 + 3*10/729)  -- (0 + 6*10/729, 0 + 3*10/729);

\draw (0 + 8*10/729, 0 + 0*10/729) -- (10/729 + 8*10/729, 0 + 0*10/729) -- (10/729 + 8*10/729, 10/729 + 0*10/729) -- (0 + 8*10/729, 10/729 + 0*10/729)  -- (0 + 8*10/729, 0 + 0*10/729);

\draw[loosely dotted] (4*10/729 + 0/162, 2*10/729 + 0/324) -- (4*10/729 + 5/162, 2*10/729 + 5/324);

\draw[domain=10/729:2*10/729,smooth,variable=\x,blue] plot ({\x},{(10/729)*(1 + sin(pi*(\x*(729/10) - 1 - 1/2) r))/2});

\draw[domain=3*10/729:4*10/729,smooth,variable=\x,blue] plot ({\x},{(10/729)*(1 + sin(pi*(\x*(729/10) - (3 + 1/2)) r))/2 + 1*10/729});

\draw[domain=5.7*10/729:6*10/729,smooth,variable=\x,blue] plot ({\x},{(10/729)*(1 + sin(pi*(\x*(729/10) - (5 + 1/2)) r))/2 + 2*10/729});

\draw[domain=7*10/729:7.5*10/729,smooth,variable=\x,blue] plot ({\x},{(3*10/729)*(1 + sin(2*pi*(\x*(729/10) + 1/4 - 7) r))/2 + 0*10/729});

\draw[domain=7.5*10/729:8*10/729,smooth,variable=\x,blue] plot ({\x},{0});
\end{scope}

\end{scope}

\end{tikzpicture}
\end{center}

\*

\noindent Let $k \in \mathbb{Z}_{\geq 0}$ and $\beta < 1$.

\*

Let $n$ refer to the level of iteration we are considering at a given time.

\*

Let $b$ be the number of boxes in the initial iteration level ($n = 1$), and let $m$ be the total number of solid curves and boxes in the initial iteration. We shall choose $b = \frac{m + 1}{2}$.

Note: This forces $m$ to be an odd natural number.

\*

{\bf Construction at iteration level $n = 1$.} We begin with $b$ boxes of dimension $\frac{1}{m} \times \frac{\beta}{m}$ arranged in the unit square so that the first $b - 1$ boxes form a diagonal with bottom left corners having coordinates $\left(\frac{2i}{m},\frac{i}{m} \right)$, for $0 \leq i \leq b-2$. The remaining box then has its bottom left corner placed at $\left( \frac{2b - 2}{m}, 0 \right)$.

\vfill

\pagebreak

\noindent To connect the first $b - 1$ boxes we use smooth curves beginning at the bottom right-hand corner of one box and ending at the bottom left-hand corner of the next box. We choose these curves, $g_{k1} (x)$, to be translations of the solution to
\begin{eqnarray}
\frac{d g_{k1}}{dx} = C (mx)^{k} (1 - mx)^{k}, \hspace{0.5in} g_{k1} (0) = 0, \hspace{0.5in} g_{k1} \left( \frac{1}{m} \right) = \frac{\beta}{m}, \nonumber
\end{eqnarray}
on the interval $\left[ 0,\frac{1}{m} \right]$, for some constant $C$. This constant is given in \cite{slat}.

Note: Any suitable flat function would work here, all we require is a $C^k$ function on a closed interval with trivial first $k$-derivatives at both ends.

Solving the above ODE gives us the following connecting curves
\begin{eqnarray}
g_{k1} (x) & = & \frac{\beta}{m} \frac{1}{\sum_{i = 0}^{k} \binom{k}{i} \frac{(-1)^i}{k + 1 + i}} \sum_{i=0}^{k} \binom{k}{i} \frac{(-1)^{i}}{k + 1 + i} (mx)^{k + 1 + i} \nonumber \\
& = & \frac{\beta}{m} \frac{\left(2k + 1 \right)!}{\left(k! \right)^2} \sum_{i=0}^{k} \binom{k}{i} \frac{(-1)^{i}}{k + 1 + i} (mx)^{k +1 + i}.\nonumber
\end{eqnarray}

\noindent To join the penultimate box to the final box we use a translation of the previous curve combined with a reflection and scaling: 
\begin{eqnarray}
h_{k1} (x) & = & \frac{\beta}{m} (b - 2) \frac{(2k+1)!}{\left(k! \right)^2} \sum_{i=0}^{k} \binom{k}{i} \frac{(-1)^{i}}{k +1 + i} \left( 1 - 2 mx \right)^{k +1 + i}, \nonumber
\end{eqnarray}
for $0 \leq x \leq \frac{1}{m}$, and $h_{k1}(x) = 0$ for $\frac{1}{2m} \leq x \leq \frac{1}{m}$.

\begin{center}
\begin{tikzpicture}[scale=0.87]
\draw[->] (0,0) -- (0,5.4);
\draw[->] (0,0) -- (5.4,0);

\foreach \x in {0, 5}
\draw[shift={(\x,0)},color=black] (0pt,3pt) -- (0pt,-3pt);
\draw (0,0) node[below] {$0$};
\draw (5,0) node[below] {$\frac{1}{m}$};

\foreach \y in {0, 3}
\draw[shift={(0,\y)},color=black] (-3pt,0pt) -- (3pt,0pt);
\draw (0,0) node[left] {$0$};
\draw (0,3) node[left] {$\frac{\beta}{m}$};

\draw[domain=0:5,smooth,variable=\x,blue] plot ({\x},{(3/2)*(1 - cos(pi*\x/5 r))});

\draw (2.5,2.5) node[color=blue] {$g_{k1}(x)$};

\draw[->] (8,0) -- (8,5.4);
\draw[->] (8,0) -- (8 + 5.4,0);

\foreach \x in {0,2.5,5}
\draw[shift={(8 + \x,0)},color=black] (0pt,3pt) -- (0pt,-3pt);
\draw (8 + 0,0) node[below] {$0$};
\draw (8 + 2.5,0) node[below] {$\frac{1}{2m}$};
\draw (8 + 5,0) node[below] {$\frac{1}{m}$};

\foreach \x in {0,5}
\draw[shift={(8,\x)},color=black] (-3pt,0pt) -- (3pt,0pt);
\draw (8,0) node[left] {$0$};
\draw (8,5) node[left] {$\frac{(b - 2) \beta}{m}$};

\draw[domain=10.5:13,variable=\x,blue] plot ({\x},{0});

\draw (10.5,2.5) node[color=blue] {$h_{k1}(x)$};

\begin{scope}[shift={(8,0)}]
\draw[domain=0:2.5,smooth,variable=\x,blue] plot ({\x},{(5/2)*(1 + cos((2*pi*\x/5) r))});
\end{scope}

\end{tikzpicture}
\end{center}
This gives us the first iteration: $n = 1$. \vspace{-0.05in}

\*

\noindent For the next iteration, $n = 2$, we take the $b$ boxes of dimension $\frac{1}{m} \times \frac{\beta}{m}$, and into each of these boxes we identically construct a new collection of boxes and curves similar to those in iteration $n = 1$, with the exception that the new boxes have dimension $\frac{1}{m^2} \times \frac{\beta^{2}}{m^2}$ and the new curves are all appropriately scaled so that they are all translations of
\begin{eqnarray}
g_{k2}(x) = \frac{\beta}{m} g_{k1} \left( mx \right) \hspace{0.2in} \text{ and } \hspace{0.2in} h_{k2}(x) = \frac{\beta}{m} h_{k1} \left( mx \right). \nonumber
\end{eqnarray}

We then repeat this process ad infinitum, for each iteration $n$. \vspace{-0.05in}

\*

This gives us our function $f(x) : [0,1] \rightarrow [0,1]$.

\pagebreak

\begin{claim}
$f(x) \in C^{k}\left( [0,1] \right)$.
\end{claim}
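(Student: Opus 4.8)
The plan is to realise $f$ as a limit, in the norm of $C^k([0,1])$, of the natural finite-stage approximants, and then to invoke completeness of that Banach space.

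First I would introduce the approximants. For $n\ge 1$, let $f_n$ be obtained by performing the construction through iteration level $n$ and then filling each level-$n$ box with the horizontal segment along its lower edge; thus $f_n$ is a \emph{finite} concatenation of affine reparametrisations of the model curves $g_{kj},h_{kj}$ $(1\le j\le n)$, of zero-segments, and of constant segments (the filled boxes). The first thing to verify is that $f_n\in C^k([0,1])$: at every junction of two consecutive pieces the values agree (the boxes were positioned to make them agree), and the one-sided derivatives of orders $1,\dots,k$ all vanish on both sides. Here one uses that $g_{k1}'(x)=C(mx)^k(1-mx)^k$ and --- after a short simplification --- that $h_{k1}'$ is a constant multiple of $(2mx)^k(1-2mx)^k$, so $g_{k1}$ and $h_{k1}$ each have a zero of order $k$ in every one of their first $k$ derivatives at both endpoints; all later $g_{kj},h_{kj}$ are affine rescalings of these (via $g_{k,j+1}(x)=\tfrac{\beta}{m}g_{kj}(mx)$, and likewise for $h$) and so inherit this flatness, while constant and zero segments are flat trivially. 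This is exactly the point of the flatness requirement on the connecting curves.

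Next I would prove geometric decay of $\|f_{n+1}-f_n\|_{C^k}$. The functions $f_n$ and $f_{n+1}$ agree off the pairwise disjoint $x$-projections $J$ of the level-$n$ boxes, and on such a $J$ --- sitting beneath a level-$n$ box $B$ of height $(\beta/m)^n$ --- $f_n$ is constant while $f_{n+1}$ is assembled from translates of $g_{k,n+1},h_{k,n+1}$ and of constant and zero segments; since both functions take all their values on $J$ inside $B$, one has $\|f_{n+1}-f_n\|_{L^\infty(J)}\le(\beta/m)^n$. Differentiating the recursion $g_{k,n+1}(x)=\tfrac{\beta}{m}g_{kn}(mx)$ gives $g_{k,n+1}^{(j)}(x)=\beta m^{\,j-1}g_{kn}^{(j)}(mx)$, so $\|g_{k,n+1}^{(j)}\|_\infty=(\beta m^{\,j-1})^{\,n}\|g_{k1}^{(j)}\|_\infty$ (and the same for $h$; constant and zero segments have vanishing $j$-th derivative), whence, using disjointness of the $J$'s, for $1\le j\le k$
\[
\bigl\|f_{n+1}^{(j)}-f_n^{(j)}\bigr\|_\infty\ \le\ (\beta m^{\,j-1})^{\,n}\,M_j,\qquad M_j:=\max\bigl\{\|g_{k1}^{(j)}\|_\infty,\ \|h_{k1}^{(j)}\|_\infty\bigr\}.
\]
Summing over $j=0,\dots,k$ yields $\|f_{n+1}-f_n\|_{C^k}\le C(\beta m^{\,k-1})^{\,n}$. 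This is the step I expect to be the main obstacle: one must track carefully how the horizontal contraction rate $1/m$ and the vertical rate $\beta/m$ compound over $n$ levels and feed into the $j$-th-derivative bounds, and observe that the case $j=k$ is the binding one. The bound is summable precisely when $\beta m^{\,k-1}<1$ --- automatic for $k\le 1$, and for $k\ge 2$ the one point where the construction tacitly needs $\beta$ small relative to $m$; this costs nothing in Theorem~\ref{t1}, since one may take $m$ large and $\beta$ correspondingly small while the limiting Cantor set retains Hausdorff dimension $\log b/\log m\to 1$.

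Finally, $(f_n)$ is then Cauchy in $C^k([0,1])$ and hence converges there to a $C^k$ function, which equals $f$: indeed $f_n\to f$ pointwise, since on any interval created at a finite stage one has $f_n=f$ for all large $n$, and any remaining point lies in a nested sequence of level-$n$ boxes of height $(\beta/m)^n\to 0$ whose lower edges are exactly the values $f_n(x)$ and converge to $f(x)$. Therefore $f\in C^k([0,1])$. The junction check of the first step and the pointwise identification here are routine bookkeeping; the content is in the derivative estimate.
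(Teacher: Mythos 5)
Your route is genuinely different from the paper's: you realise $f$ as the limit in $C^k([0,1])$ of finite-stage approximants and invoke completeness, whereas the paper argues pointwise, showing by induction on the order of differentiation that the one-sided difference quotients of $f^{(l-1)}$ at the box endpoints tend to $0$ (bounding $|f^{(l-1)}(e_i)|$ by $\sup|g_{kN(i)}^{(l-1)}|$ against $|e-e_i|\ge m^{-N(i)}$). Your version is tidier in that it delivers continuity of $f^{(k)}$ at the accumulation points of the construction in one stroke. More importantly, your scaling $\|g_{k,n+1}^{(j)}\|_\infty=(\beta m^{\,j-1})^{n}\|g_{k1}^{(j)}\|_\infty$ is the correct one: each differentiation of $g_{k,n+1}(x)=\frac{\beta}{m}g_{kn}(mx)$ costs a factor of $m$, and the paper's displayed formula for $g_{kN}^{(l-1)}$ keeps the prefactor $\frac{\beta^{N}}{m^{N}}$ at every derivative order, silently discarding the chain-rule powers of $m^{N}$; that is why the paper concludes $C^k$ for every $\beta<1$. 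Your condition $\beta m^{\,k-1}<1$ is in fact necessary as well as sufficient for $k\ge 2$: on the interval carrying a level-$n$ connecting curve, $f^{(k)}$ is a translate of $g_{kn}^{(k)}$, whose sup norm is $(\beta m^{\,k-1})^{\,n-1}\|g_{k1}^{(k)}\|_\infty$ with $\|g_{k1}^{(k)}\|_\infty>0$, so if $\beta m^{\,k-1}>1$ then $f^{(k)}$ is unbounded and $f\notin C^k$. So you have exposed a real gap in the paper's argument rather than introduced one; the rest of your proof (flatness at junctions, disjointness of the supports of $f_{n+1}-f_n$, identification of the limit) is routine and sound.

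The one place you go wrong is the parenthetical assertion that imposing $\beta<m^{1-k}$ ``costs nothing'' in Theorem \ref{t1}. The Cantor set whose dimension is computed in the next claim lives in the \emph{range}, not the domain: it is the intersection of unions of $(b-1)^n$ intervals of length $(\beta/m)^n$, of dimension $\frac{\log(b-1)}{\log m-\log\beta}$, not $\frac{\log b}{\log m}$. Once $\beta<m^{1-k}$ this is at most $\frac{\log(b-1)}{k\log m}$, which tends to $\frac{1}{k}$ as $b\to\infty$, so for $k\ge 2$ the constraint is very costly and the construction as written does not obviously yield the theorem. That is a difficulty for the paper rather than for your proof of this particular claim, but the remark as stated is false and should not be leaned on.
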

\begin{proof}
\noindent The domain of $f(x)$ can be broken in to two groups: interior points on which the solid curves are defined and boundary points at the left and right endpoints of some box.

\vspace{0.1in}

\noindent It is clear that $f \in C^{k}$ for any interior point on which a solid curve is defined. It remains to establish that $f \in C^{k}$ at the endpoints of the boxes. More specifically, it remains to establish that $f(x)$ is $k$-times differentiable from the left for right-hand endpoints, and from the right for left-hand endpoints. We prove this by induction on order of differentiation $1 \leq j \leq k$.

\vspace{0.1in}

\noindent Let $x = e$ be any left endpoint of some box from our construction process.

\vspace{0.2in}

\noindent {\bf Case: $j=1$.}

\vspace{0.0in}

\noindent Let $\left( e_i \right)_{i \in \mathbb{N}}$ be any sequence of points, for which we have defined right-hand derivatives, that converge from the right to $e$. 
\vspace{0.1in}

\noindent By construction, for any given $i \in \mathbb{N}$ there exists $N(i) \in \mathbb{N}$ that tells us the level of the iterative process at which $f(e_i)$ was defined. Since $\displaystyle\lim_{i \to \infty} e_i = e$ it follows that $\displaystyle\lim_{i \to \infty} N(i) = \infty$.

\vspace{0.1in}

\noindent If $f(e_i)$ is defined in the $N(i)$-th level of the iterative process then
$$
\left( \frac{1}{m} \right)^{N(i)} \leq \left| e - e_i \right| < \left( \frac{1}{m} \right)^{N(i) - 1} \hspace{0.1in} \text{ and } \hspace{0.25in}
\left| f(e) - f(e_i) \right| < \left( \frac{\beta}{m} \right)^{N(i) - 1}.
$$

\vspace{0.1in}

\noindent Hence
\begin{eqnarray}
\frac{| f(e) - f(e_i) |}{|e - e_i|} < \beta^{N(i) - 1} m. \nonumber
\end{eqnarray}

\vspace{0.1in}

\noindent By definition, $\beta < 1$, and thus
\begin{eqnarray}
\partial_{+} f(e) = \lim_{i \to \infty} \frac{| f(e) - f(e_i) |}{| e - e_{i} |} \leq \lim_{i \to \infty} \beta^{N(i) - 1} m = 0 = \partial_{-} f(e), \nonumber
\end{eqnarray}
the last equality comes from our choice of the solid curves.

\vspace{0.1in}

\noindent This argument is virtually identical for right endpoints. Therefore $f(x) \in C^{1}$ and  $f^{(1)}(e) = 0$.

\*

\noindent {\bf Case $j = l \leq k$.}

\vspace{0.1in}

\noindent Assume that $f^{(1)}(e) = \cdots = f^{(l-1)}(e) = 0$ for some left endpoint, $e$, of a box. Again, let $\left( e_i \right)_{i \in \mathbb{N}}$ be any sequence of points, for which we have defined right-hand derivatives, that converge from the right to $e$.

\*

As above, there exists $N(i) \in \mathbb{N}$ telling us the level of the iterative process at which $f(e_i)$ is defined.

\*

Consider $\left| f^{(l-1)}(e) - f^{(l-1)} (e_i) \right| = \left| f^{(l-1)}(e_i) \right|$. When defining the solid curve on $x = e_{i}$ we used a translation of one of the polynomials $g_{kN(i)} (x)$ or $h_{kN(i)}(x)$. Thus $$\left| f^{(l-1)}(e) - f^{(l-1)} (e_i) \right| = \left| f^{(l-1)}(e_i) \right| = \left| g_{kN(i)}^{(l-1)} (x) \right| \text{ or } \left| h_{kN(i)}^{(l-1)} (x) \right|.$$

\*

In our construction we chose that
\begin{eqnarray}
g_{kN(i)}^{(1)} (x) = \frac{\beta^{N(i)}}{m^{N(i)}} \frac{\left(2k + 1 \right)!}{\left(k! \right)^2}  (m^{N(i)} x)^{k} \left( 1 - m^{N(i)} x \right)^{k} \nonumber
\end{eqnarray}
on $\left[ 0, \frac{1}{m^{N(i)}} \right]$ and
\begin{equation}
h_{kN(i)}^{(1)} (x) = \frac{d}{dx} \left( g_{kN(i)} \left( \frac{1}{m^{N(i)}} - 2x \right) \right) = -2 g^{(1)}_{kN(i)} \left( \frac{1}{m^{N(i)}} - 2x \right), \nonumber
\end{equation}
on $\left[ 0, \frac{1}{2m^{N(i)}} \right]$ and $h^{(1)}_{k N(i)} (x) = 0$ on $\left[ \frac{1}{2m^{N(i)}}, \frac{1}{m^{N(i)}} \right]$.

\*

Hence
\begin{eqnarray}
g_{kN(i)}^{(l - 1)} (x) = \frac{\beta^{N(i)}}{m^{N(i)}} (m^{N(i)} x)^{k + 2 - l} \left( 1 - m^{N(i)} x \right)^{k + 2 - l} p_{k,l -1,i}(m^{N(i)} x), \nonumber
\end{eqnarray}
for some polynomial $p_{k,l-1,i}(m^{N(i)} x)$ of order $l - 2$ defined on $\left[ 0, \frac{1}{m^{N(i)}} \right]$. Also
\begin{eqnarray}
h_{kN(i)}^{(l - 1)} (x) = \left(-2 \right)^{(l-1)} g^{(l-1)}_{kN(i)} \left( \frac{1}{m^{N(i)}} - 2x \right), \nonumber
\end{eqnarray}
on $\left[ 0, \frac{1}{2m^{N(i)}} \right]$ and $h^{(l-1)}_{kN(i)}(x) = 0$ on $\left[ \frac{1}{2m^{N(i)}}, \frac{1}{m^{N(i)}} \right]$.

\*

This tells us three things:
\begin{itemize}
\item[1.] The first $k$ right-derivatives of the solid curves at their left end-points are equally $0$,

\item[2.] The first $k$ left-derivatives of the solid curves at their right end-points are equally $0$,

\item[3.] Since $p_{k,l -1,i}(m^{N(i)} x)$ is a polynomial defined on $\left[ 0, \frac{1}{m^{N(i)}} \right]$ it must be bounded by some constant $c(k,l)$ only depending on $k$ and $l$. Therefore
\begin{eqnarray}
\left| f^{(l-1)}(e) - f^{(l-1)} (e_i) \right| & \leq & \max \left\{ \left| g_{kN(i)}^{(l-1)} (x) \right|, \left| h_{kN(i)}^{(l-1)} (x) \right| \right\} \nonumber \\
& \leq & \frac{\beta^{N(i)}}{m^{N(i)}} C(k,l), \nonumber
\end{eqnarray}
where $C(k,l)$ is some constant depending on $k$ and $l$.
\end{itemize}

\*

Now, as in the initial case, we have that if $f(e_i)$ is defined in the $N(i)$-th level of the iterative process then
$$
\left( \frac{1}{m} \right)^{N(i)} \leq \left| e - e_i \right| < \left( \frac{1}{m} \right)^{N(i) - 1} \hspace{0in} \text{ and } \hspace{0.2in} \left| f^{(l-1)}(e) - f^{(l-1)}(e_i) \right| < \left( \frac{\beta}{m} \right)^{N(i)} C(k,l).
$$

\*

Hence
\begin{eqnarray}
\frac{\left| f^{(l-1)}(e) - f^{(l-1)}(e_i) \right|}{\left| e - e_i \right|} \leq \beta^{N(i)} C(k,l). \nonumber
\end{eqnarray}

Taking the limit as $i \to \infty$:
\begin{eqnarray}
\partial_{+} f^{(l-1)}(e) = \lim_{i \to \infty} \frac{| f(e) - f(e_i) |}{| e - e_{i} |} \leq \lim_{i \to \infty} \beta^{N(i)} C(k,l) = 0 = \partial_{-} f^{(l-1)}(e). \nonumber
\end{eqnarray}

The argument is virtually identical for right endpoints. Thus $f(x) \in C^{l}$. This gives us the inductive step.

\*

Therefore, by strong induction, $f(x) \in C^{k} \left( [0,1] \right)$.

\end{proof}

\*

\begin{claim}
$\dim_{H} I_{1} = \frac{\log(b-1)}{\log(2b +1) - \log(\beta)}$.
\end{claim}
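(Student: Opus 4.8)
The plan is to show that $I_1$ coincides, up to a countable set that cannot affect Hausdorff dimension, with a self-similar subset of the $y$-axis, and then to compute the dimension of that self-similar set by the Moran--Hutchinson formula.

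First I would set up a \emph{descent code} for each value $y\in[0,1]$. At the first iteration level, $y$ lies in the vertical range of exactly one of the $b-1$ diagonal boxes, or of the return box, or of one of the connecting curves; if it lies in a diagonal box, apply the affine map carrying that box onto the unit square and repeat inside, generating the next symbol, and so on. Two extremes are easy: if the code terminates because the rescaled value falls on a connecting curve, then $f^{-1}(y)$ is a union of $O(n)$ intervals of length $m^{-n}$ together with a countable set and $\dim_H f^{-1}(y)=0$; if it terminates on the flat segment of a return curve $h_{kn}$, then $f^{-1}(y)$ contains an honest interval and $\dim_H f^{-1}(y)=1$. The heart of this step is to promote these observations to the statement that $\dim_H f^{-1}(y)=1$ \emph{exactly} for $y$ in the attractor $E$ of the iterated function system $\Psi=\{\psi_1,\dots,\psi_{b-1}\}$ on the $y$-axis, where $\psi_i(t)=c_i+r\,t$ with $c_i$ the base heights of the level-one diagonal boxes and $r$ the vertical contraction ratio passed from level $n$ to level $n+1$. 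Tracking the scalings $\tfrac1m\mapsto\tfrac1{m^2}$ and $\tfrac{\beta}{m}\mapsto\tfrac{\beta^2}{m^2}$ together with $b=\tfrac{m+1}{2}$ identifies $r=\beta/(2b+1)$, so $I_1$ has the same Hausdorff dimension as $E$.

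Next I would verify the open set condition for $\Psi$. Choosing $U$ to be an open interval slightly larger than $[0,\max f]$, one has $\psi_i(U)\subseteq U$ for all $i$, and the images $\psi_1(U),\dots,\psi_{b-1}(U)$ are pairwise disjoint because consecutive base heights of the diagonal boxes differ by $1/m$ while each box has height only $\beta/m$ and $\beta<1$; the same gap estimate reproduces the separation at every deeper level. With the open set condition in hand, Moran's theorem gives $\dim_H E=s$, where $s$ is the unique solution of $(b-1)\,r^{s}=1$, i.e.
\[
s=\frac{\log(b-1)}{\log(1/r)}=\frac{\log(b-1)}{\log(2b+1)-\log\beta},
\]
and hence $\dim_H I_1=s$, as claimed.

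The main obstacle is the characterisation in the first step, specifically the inclusion $E\subseteq I_1$: one must show that $f^{-1}(y)$ really is $1$-dimensional for \emph{every} $y$ with an infinite descent code, and not just for the countable dense family of ``box-bottom'' values whose preimages visibly contain an interval. This is precisely what planting a flat piece of a return curve $h_{kn}$ inside every box at every level is designed to achieve, and making it work requires assembling the pieces of $f^{-1}(y)$ coming from all depths simultaneously and bounding the resulting covering numbers uniformly in $y\in E$. The open set condition and the Moran computation are then routine.
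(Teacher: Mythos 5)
Your computation of the dimension of the Cantor set on the $y$-axis is correct and takes a genuinely different route from the paper. The paper proves both inequalities by hand: the upper bound from the natural covers by $(b-1)^n$ intervals of length $\left(\beta/m\right)^n$, and the lower bound by passing to a finite subcover, sorting the covering intervals by scale, and counting how many stage-$n$ construction intervals each one can meet. Your appeal to Moran's theorem under the open set condition packages exactly that two-sided argument (the paper's lower-bound counting \emph{is} the standard proof of Moran's theorem), and your separation estimate --- consecutive box bases differ by $1/m$ while each box has height only $\beta/m<1/m$ --- is the right verification of the OSC (in fact the images are disjoint, so you even have strong separation). Both routes land on the $s$ solving $(b-1)(\beta/m)^s=1$. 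So for the part of the claim that amounts to computing $\dim_H S$, where $S$ is the intersection of the box ranges, your argument is complete and arguably cleaner than the paper's.

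The gap is the one you flag yourself: the identification of $I_1$ with the attractor $E$, and specifically the inclusion $E\subseteq I_1$. You should know that the paper does not close this either --- it asserts in one sentence that computing $\dim_H I_1$ ``is equivalent to'' computing $\dim_H S$ and moves on --- so you have lost nothing relative to the source. But the step is not a routine verification to be deferred, and your own descent-code analysis shows why. The only values $y$ whose preimage visibly contains an interval are the countably many box-bottom heights (codes eventually equal to $000\cdots$), since each flat piece of a return curve sits at the single height of the bottom edge of its box. For any other $y\in E$, the preimage is contained in the union of the stage-$n$ boxes whose vertical range contains $y$ --- of which there are at most $2^n$, the doubling coming only from the return box sharing the range of the first box --- together with countably many points on the monotone connecting and descending arcs. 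That is a cover by $O(2^n)$ intervals of length $m^{-n}$ plus a countable set, which caps $\dim_H f^{-1}(y)$ at $\log 2/\log m<1$. So the hoped-for uniform lower bound on covering numbers over all $y\in E$ cannot come from the flat pieces alone; as described, the construction appears to put only a countable set into $I_1$, and any complete proof (yours or the paper's) must either read the construction differently or supply a genuinely new source of preimage mass at the uncoded heights. Name this as an open problem in your write-up rather than as an implementation detail.
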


\begin{proof}
In each level of the iteration we added flat sections of curves. These flat sections mean that $f(x)$ has points in its range whose pre-images have Hausdorff dimension $1$.

We want to calculate the Hausdorff dimension of the collection of all these points in the range of $f(x)$, which is equivalent to calculating the Hausdorff dimension of the intersection of all the boxes in the range. Let us denote this set by $S$.

\begin{center}
\begin{tikzpicture}
\draw (0,0) rectangle (0.2,9);
\draw  (0.1,3.25) node[circle, fill, inner sep=-0.5pt] {.};
\draw  (0.1,3.5) node[circle, fill, inner sep=-0.5pt] {.};
\draw  (0.1,3.75) node[circle, fill, inner sep=-0.5pt] {.};
\foreach \y in {1,2,3,5}{
	\draw[fill=gray] (0,\y-1) rectangle (0.2,\y);
}
\draw (0.1,-0.3) node[below] {$n=1$};

\draw (0,0) node[left] {$y = 0$};
\draw (0,9) node[left] {$y = 1$};
\draw (8.6,4.5) node[left] {$S$};

\draw (2,0) rectangle (2.2,9);
\draw  (2.1,3.25) node[circle, fill, inner sep=-0.5pt] {.};
\draw  (2.1,3.5) node[circle, fill, inner sep=-0.5pt] {.};
\draw  (2.1,3.75) node[circle, fill, inner sep=-0.5pt] {.};
\foreach \l in {1,2,3,5}{
	\begin{scope}[shift={(2,\l-1)}, yscale=1/9]
	\draw (0,0) rectangle (0.2,9);
	\foreach \y in {1,2,3,5}{
		\draw[fill=gray] (0,\y-1) rectangle (0.2,\y);
	}
	\end{scope}
}
\draw (2.1,-0.3) node[below] {$n=2$};

\draw (4,0) rectangle (4.2,9);
\draw  (4.1,3.25) node[circle, fill, inner sep=-0.5pt] {.};
\draw  (4.1,3.5) node[circle, fill, inner sep=-0.5pt] {.};
\draw  (4.1,3.75) node[circle, fill, inner sep=-0.5pt] {.};
\foreach \i in {1,2,3,5}{
	\begin{scope}[shift={(4,\i-1)}, yscale=1/9]
		\draw (0,0) rectangle (0.2,9);
		\foreach \j in {1,2,3,5}{
			\begin{scope}[shift={(0,\j-1)}, yscale=1/9]
			\draw (0,0) rectangle (0.2,9);
			\foreach \y in {1,2,3,5}{
				\draw[fill=gray] (0,\y-1) rectangle (0.2,\y);
			}
			\end{scope}
		}
	\end{scope}
}
\draw (4.1,-0.3) node[below] {$n=3$};

\draw  (6,4.5) node[circle, fill, inner sep=0pt] {.};
\draw  (6.4,4.5) node[circle, fill, inner sep=0pt] {.};
\draw  (6.8,4.5) node[circle, fill, inner sep=0pt] {.};

\draw  (6.4,4.3) node[below] {$n \to \infty$};
\end{tikzpicture}
\end{center}

Set $d = \frac{\log (b -1)}{\log(2b + 1) - \log(\beta)}$. We first prove that $\dim_{H} (S) \leq d$. Suppose $\gamma > d$. The iterative process used to construct $f(x)$ gives us a sequence of coverings of $S$. At level $n = 1$ we can cover $S$ by $b - 1$ intervals of length $\frac{\beta}{m}$. At level $n = 2$ we can cover $S$ by $(b - 1)^2$ intervals of length $\left( \frac{\beta}{m} \right)^2$. After $n$ iterations we can cover $S$ by $(b - 1)^n$ intervals of length $\left( \frac{\beta}{m} \right)^n$. The $\gamma$-total length of the $n$-th cover of $S$ is then $(b - 1)^n \left( \frac{\beta}{m} \right)^{\gamma n}$.

If we take the limit of this as $n \to \infty$ we get
\begin{eqnarray}
\lim_{n \to \infty} (b - 1)^n \left( \frac{\beta}{m} \right)^{\gamma n} & = & \lim_{n \to \infty} \exp \left[ n \left( \log(b - 1) - \gamma \left( \log (m) - \log (\beta) \right) \right) \right] = 0. \nonumber
\end{eqnarray}

Therefore $C^{\gamma}_{H} (S) = 0$ and $\dim_{H} (S) \leq d = \frac{\log (b -1)}{\log(2b + 1) - \log(\beta)}$.

\*

For the other direction we will show that $C^{d}_{H} (S) > 0$.

\*

Let $(S_i)_{i \in \mathbb{N}}$ be a countable cover of $S$.

\*

By compactness \cite{steen}, given any $\varepsilon > 0$, there exist a finite collection of open intervals $D_1, \hdots, D_l$ such that $\cup_{i=1}^{\infty} S_{i} \subseteq \cup_{j=1}^{l} D_{j}$
and
$$ \sum_{j=1}^{l} \left| D_j \right|^{\alpha} < \sum_{i=1}^{\infty} \left| S_{i} \right|^{\alpha} + \varepsilon.$$

Let us choose $n$ such that $$ \left( \frac{\beta}{m} \right)^n \leq \min \left\{ | D_{j} | \ : \ j=1, \hdots, l \right\}.$$

For $i = 1, \hdots, n$ define
$$ M_{i} = \# \left\{ D_{j} \ : \ \left(\frac{\beta}{m} \right)^{i}  \leq | D_{j} | < \left( \frac{\beta}{m} \right)^{i - 1} \right\}.$$

It follows that $$ \sum_{j=1}^{l} |D_{j} |^{\alpha} \geq \sum_{j=1}^{n} M_{j} \left( \frac{\beta}{m} \right)^{j \alpha} = \sum_{j=1}^{n} M_{j} \left( \frac{1}{b - 1} \right)^{j}.$$

\*

Consider any $D_{j}$. There must exist some $i$ such that $\left( \frac{\beta}{m} \right)^{i} \leq \left| D_{j} \right| < \left(\frac{\beta}{m} \right)^{i - 1}$. Thus $D_{j}$ can intersect at most $2$ of the $(b - 1)^{i}$ intervals obtained in the $i$-th level of the iterative process. Each of these intervals produces $(b - 1)^{n - i}$ sub-intervals at the $n$-th level of the iterative process, hence $D_{j}$ contains at most $2(b - 1)^{n - i}$ intervals from the $n$-th level of the construction process. In total, the $n$-th step of the construction process has $(b-1)^{n}$ intervals. Therefore
$$ (b - 1)^{n} \leq  \sum_{i=1}^{l} 2 M_{i} (b - 1)^{n - i} \hspace{0.2in} \Rightarrow \hspace{0.2in} \frac{1}{2} \leq \sum_{i=1}^{l} \frac{M_i}{(b - 1)^{i}}. $$

Combining this with the above equation gives:
\begin{eqnarray}
\frac{1}{2} \leq \sum_{j=1}^{l} \left| D_{j} \right|^{d} < \sum_{i=1}^{\infty} \left| S_{i} \right|^{d} + \varepsilon. \nonumber
\end{eqnarray}

Let $\varepsilon = \frac{1}{4}$. Then
\begin{eqnarray}
\frac{1}{4} <  \sum_{i=1}^{\infty} \left| S_{i} \right|^{d}. \nonumber
\end{eqnarray}

Therefore $ \sum_{i=1}^{\infty} \left| S_{i} \right|^{d}$ is bounded below and hence
\begin{eqnarray}
\dim_{H} (S) \geq d = \frac{\log(b - 1)}{\log(2b+1) - \log(\beta)}. \nonumber
\end{eqnarray}

\end{proof}

Using the previous claim and letting $b \to \infty$, L'H\^{o}pital's Rule tells us that:
\begin{eqnarray}
\lim_{b \to \infty} \frac{\log(b - 1)}{\log(2b+1) - \log(\beta)} \  = \  \lim_{b \to \infty} \left[ \frac{\frac{1}{b - 1}}{\frac{2}{2b + 1}} \right] \ = \ \lim_{b \to \infty} \left[ 1 + \frac{3}{2b - 2} \right] = 1. \nonumber
\end{eqnarray}

\*

Therefore $1$ is indeed a sharp bound for $\dim_{H} I_{1}(f) \leq 1$.

\end{ex}

\vfill

\end{document}